\documentclass{amsart}
\usepackage{amsmath}
\usepackage{graphicx}
\usepackage{latexsym}

\parskip.05in

\newtheorem{thm}{Theorem}

\newtheorem{prop}[thm]{Proposition}

\newtheorem{theorem}[thm]{Theorem}
\newtheorem{lemma}[thm]{Lemma}
\newtheorem{corollary}[thm]{Corollary}
\newtheorem{proposition}[thm]{Proposition}

\theoremstyle{definition}
\newtheorem{defn}[thm]{Definition}

\newtheorem{rmk}[thm]{Remark}

\newtheorem{remark}[thm]{Remark}

\setlength\arraycolsep{2pt}

\newcommand{\Z}{\mathbb{Z}}

\def \x {\times}
\def \eu{{\text{e}}}
\def \sign{{\text{sign}}}

\def\cl{{\rm cl}}
\def\scl{{\rm scl}}

\begin{document}

\title[Sections of surface bundles and Lefschetz fibrations]
{Sections of surface bundles \\ and Lefschetz fibrations}

\author[R. \.{I}. Baykur]{R. \.{I}nan\c{c} Baykur}
\address{Max Planck Institut f\"ur Mathematik, Bonn, Germany \newline
\indent Department of Mathematics, Brandeis University, Waltham MA, USA}
\email{baykur@mpim-bonn.mpg.de, baykur@brandeis.edu}

\author[M. Korkmaz]{Mustafa Korkmaz}
\address{Department of Mathematics, Middle East Technical University, Ankara, Turkey}
\email{korkmaz@metu.edu.tr}

\author[N. Monden]{Naoyuki Monden}
\address{Department of Mathematics, Osaka University, Osaka, Japan}
\email{n-monden@cr.math.sci.osaka-u.ac.jp}

\begin{abstract}
We investigate the possible self-intersection numbers for sections of surface bundles and Lefschetz fibrations over surfaces. When the fiber genus $g$ and the base genus $h$ are positive, we prove that the adjunction bound $2h-2$ is the only universal bound on the self-intersection number of a section of any such genus $g$ bundle and fibration. As a side result, in the mapping class group of a surface with boundary, we calculate the precise value of the commutator lengths of all powers of a Dehn twist about a boundary component, concluding that the stable commutator length of such a Dehn twist is $1/2$. We furthermore prove that there is no upper bound on the number of critical points of genus--$g$ Lefschetz fibrations over surfaces with positive genera admitting sections of maximal self-intersection, for $g \geq 2$.
\end{abstract}

\maketitle

\setcounter{secnumdepth}{2}
\setcounter{section}{0}


\section{Introduction}

Surface bundles over surfaces, and more recently, Lefschetz fibrations, have constituted a rich source of examples of smooth, symplectic, and complex manifolds. The current article explores the existence and the diversity of surface bundles and Lefschetz fibrations which admit sections of maximal possible self-intersection.

Fixing the fiber and the base genera of a surface bundle over a surface, or a Lefschetz fibration, the first question we will tackle is the following: What are the constraints on self-intersection numbers of sections of \emph{all} such maps? The fundamental obstruction hinges on the fact that the total spaces of these maps are symplectic $4$-manifolds and comes from an application of the adjunction inequality for Seiberg-Witten invariants. (Proposition~\ref{adjunctionbounds}.) Namely, for a given genus--$g$ Lefschetz fibration over a surface $\Sigma_h$ of genus $h$ with $g, h \geq 1$, the self-intersection of a section $S$ of this fibration can be at most $2h-2$. For a surface bundle, the self-intersection number of $S$ is also bounded below by $2-2h$. (In this article, whenever we talk about a Lefschetz fibration, we will assume the presence of critical points, so as to make a clear distinction.) We refer to these bounds as \emph{adjunction bounds}, and to sections attaining maximal self-intersection numbers allowed by the adjunction bounds as \emph{maximal sections}.

The natural question to ask, therefore, is whether or not the adjunction bounds are the only universal constraints on self-intersection number of sections for surface bundles or Lefschetz fibrations with fixed fiber and base genera. We take up this problem in Section \ref{Maximal} for surface bundles over surfaces. We prove that the adjunction bounds are the \emph{only} global bounds on self-intersections of sections for all $g \geq 2$ and $h \geq 1$. That is, we prove that
\begin{theorem}
Let $h\geq 1$. For every $g\geq 2$ and $2-2h\leq k \leq 2h-2$, there exists a $\Sigma_g$--bundle over $\Sigma_h$, admitting a section of self-intersection $k$, and in particular, a maximal section.
\end{theorem}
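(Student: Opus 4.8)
The plan is to translate the statement into a factorization problem in the mapping class group of a genus-$g$ surface with one boundary component, and then to extract the answer from the commutator length computation announced in the abstract. Let $\Sigma_{g}^{1}$ denote a genus-$g$ surface with one boundary component and $t_{\delta}\in\mathrm{Mod}(\Sigma_{g}^{1})$ the (central) Dehn twist about the boundary $\delta$; capping $\delta$ with a disk carrying a marked point gives a surjection $\mathrm{Mod}(\Sigma_{g}^{1})\to\mathrm{Mod}(\Sigma_{g},\ast)$ whose kernel is the infinite cyclic group $\langle t_{\delta}\rangle$. First I would recall the standard dictionary between surface bundles with sections and monodromy factorizations. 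Deleting an open disk from $\Sigma_{h}$ produces $\Sigma_{h}^{\circ}$, with $\pi_{1}(\Sigma_{h}^{\circ})$ free on $a_{1},b_{1},\dots,a_{h},b_{h}$ and boundary loop $\prod_{i=1}^{h}[a_{i},b_{i}]$. Given a $\Sigma_{g}$-bundle over $\Sigma_{h}$ with a section $S$, restricting over $\Sigma_{h}^{\circ}$ and deleting an open disk-bundle neighborhood of $S$ yields a $\Sigma_{g}^{1}$-bundle over $\Sigma_{h}^{\circ}$, classified by a homomorphism $\varphi\colon\pi_{1}(\Sigma_{h}^{\circ})\to\mathrm{Mod}(\Sigma_{g}^{1})$ lifting the original monodromy; as $t_{\delta}$ is central, $\prod_{i=1}^{h}[\varphi(a_{i}),\varphi(b_{i})]$ is a well-defined element of $\langle t_{\delta}\rangle$, equal to $t_{\delta}^{m}$ with $m=-[S]^{2}$ (the sign is an orientation convention and is immaterial here, the interval $2-2h\le k\le 2h-2$ being symmetric about $0$). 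Conversely, an identity $\prod_{i=1}^{h}[\tilde a_{i},\tilde b_{i}]=t_{\delta}^{m}$ in $\mathrm{Mod}(\Sigma_{g}^{1})$ gives such a $\varphi$ which, after capping $\delta$ (so that the relation descends to $\prod_{i=1}^{h}[\bar a_{i},\bar b_{i}]=1$ in $\mathrm{Mod}(\Sigma_{g},\ast)$), produces a $\Sigma_{g}$-bundle over $\Sigma_{h}$ with a section of self-intersection $-m$. Thus it is enough to show that for all $g\ge 2$, all $h\ge 1$, and all $m$ with $|m|\le 2h-2$, the element $t_{\delta}^{m}$ is a product of $h$ commutators in $\mathrm{Mod}(\Sigma_{g}^{1})$.

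Since $\cl(x^{-1})=\cl(x)$, since a product of $j\le h$ commutators can be extended to a product of exactly $h$ commutators by appending copies of $[1,1]$, and since the case $m=0$ is realized by the trivial bundle $\Sigma_{g}\times\Sigma_{h}$ with a constant section, this reduces to the estimate
\[
\cl\bigl(t_{\delta}^{m}\bigr)\ \le\ \bigl\lfloor m/2 \bigr\rfloor+1\qquad(m\ge 1),
\]
because the right-hand side is non-decreasing in $m$ and equals $h$ when $m=2h-2$. I would establish this --- the upper bound half of the commutator length computation of the abstract --- by constructing explicit factorizations in $\mathrm{Mod}(\Sigma_{g}^{1})$. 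The engine is the lantern relation, realized by a four-holed sphere embedded in $\Sigma_{g}^{1}$ with one boundary curve equal to $\delta$ and all other curves occurring in the relation non-separating in $\Sigma_{g}^{1}$; such an embedding exists precisely because $g\ge 2$, and since non-separating curves form a single mapping class orbit it exhibits $t_{\delta}$ as a product of a small, fixed number of commutators. Chaining this with itself and with chain relations, while carefully tracking the number of commutators each substitution consumes, should yield a factorization of $t_{\delta}^{m}$ into $\lfloor m/2\rfloor+1$ commutators, the decisive point being that the commutator count grows like $m/2$ and not like $m$.

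The main obstacle is this last, quantitative part: one must organize an entire family of lantern- and chain-relation substitutions so that the resulting commutator counts attain the optimal slope $1/2$ in every length at once --- equivalently, prove $\scl(t_{\delta})\le 1/2$ with sharp control on each finite power --- and the combinatorics of doing so without wasting commutators is the delicate heart of the argument. The complementary lower bound $\cl(t_{\delta}^{m})\ge\lceil (m+1)/2\rceil$, which makes $\cl(t_{\delta}^{m})$ exact, yields $\scl(t_{\delta})=1/2$, and shows that a maximal section genuinely forces the base genus up to $h$, would be deduced from Bavard duality applied to the homogeneous quasimorphism coming from the Meyer signature cocycle on $\mathrm{Mod}(\Sigma_{g}^{1})$; but only the upper bound is needed for Theorem~1.
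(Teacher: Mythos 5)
Your overall route is the same as the paper's: trade sections for lifts of the monodromy to $\Gamma_g^1$, so that realizing self-intersection $k$ over $\Sigma_h$ amounts to writing $t_\delta^{-k}$ as a product of $h$ commutators, and then prove an upper bound on the commutator length of powers of $t_\delta$. However, the estimate you set as your goal, $\cl(t_\delta^m)\le\lfloor m/2\rfloor+1$ for $m\ge1$, is false for every odd $m$. Already for $m=1$ it asserts that $t_\delta$ is a single commutator, which would yield a $\Sigma_g$--bundle over the torus with a section of self-intersection $\pm1$, contradicting the adjunction bound of Proposition~\ref{adjunctionbounds} (which forces $|[S]^2|\le 2h-2=0$ when $h=1$); in general that proposition gives $\cl(t_\delta^m)\ge (m+2)/2$, so for $m=2k+1$ one has $\cl(t_\delta^m)\ge k+2>\lfloor m/2\rfloor+1$. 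The correct value is $\cl(t_\delta^m)=\lfloor(m+3)/2\rfloor$ (Theorem~\ref{thm:scl}), and this weaker upper bound still suffices for the theorem because $\lfloor(|k|+3)/2\rfloor\le h$ whenever $|k|\le 2h-2$; so your reduction is repairable, but the inequality you propose to establish cannot be proved, and as stated your plan would stall at the first odd power.

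The more serious gap is that the quantitative construction, which is the entire content of the needed bound, is missing: you say that chaining lantern and chain relations ``should'' produce commutator counts growing like $m/2$ and you explicitly defer the combinatorics as the delicate heart of the argument. The ordinary lantern relation only gives $\cl(t_\delta)\le2$, and subadditivity alone then gives $\cl(t_\delta^m)\le 2m$, nowhere near the slope $1/2$ you need. What achieves slope $1/2$ in the paper is a generalized lantern relation on a four-holed disk, $t_\delta^{2}\,t_{a_1}t_{a_2}t_{a_3}t_{a_4}=t_{x_1}t_{x_2}t_{x_3}t_{x_4}$, obtained by splicing two lanterns, together with an embedding of this disk in $\Sigma_g^1$ (possible since $g\ge2$) arranged so that $a_1=a_4$. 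Because the twists on the left-hand side commute, the relation can be raised to the $k$-th power; after rewriting $(t_{x_1}t_{x_2}t_{x_3})^k$ as a product of conjugates of $t_{x_1}t_{x_2}$ times $t_{x_3}^k$, everything regroups into $k+1$ expressions of the form $t_a^kt_b^{-k}t_c^kt_d^{-k}$, each of which is a single commutator $[t_a^kt_b^{-k},f]$ as soon as some diffeomorphism $f$ carries $(a,b)$ to $(d,c)$ (Lemma~\ref{lem:comm}); an additional lantern substitution handles odd exponents at the cost of exactly one more commutator, giving $\lfloor(m+3)/2\rfloor$ in all cases (Theorem~\ref{thm:ez-d^2n}). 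Without this mechanism, or an equivalent one, your proposal does not show that $t_\delta^{-k}$ is a product of $h$ commutators for all $|k|\le 2h-2$, which is precisely what the theorem requires.
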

\noindent We, moreover, construct $\Sigma_g$--bundles over $\Sigma_h$ which admit disjoint sections that attain all the possible self-intersection numbers between $2-2h$ and $2h-2$ for any $g\geq 8h-8$ and $h \geq 1$. (Theorem \ref{maintheorem1}.)  These constructions make use of mapping class group relations we present in the same section.

For the remaining values of $g$ and $h$, we have a complete treatment, which demonstrates a contrast with our
results mentioned above: When $g$ or $h$ is zero, the conclusions are classical: For $g=0$, we have ruled surfaces, that is, $S^2$--bundles over $\Sigma_h$, which can admit sections with arbitrary self-intersections, and for $g \geq 1$ and $h=0$, there is indeed a unique $\Sigma_g$--bundle over $S^2$ given by projection onto the second factor of $\Sigma_g \x S^2$. On the other hand, in the case of $g=1$ and $h \geq 1$, we observe that another extreme situation arises: The sections of these elliptic surface bundles \emph{always} have self-intersection zero (Proposition \ref{lowgeneracases}), even though the adjunction bounds hand us a larger range from $2-2h$ to $2h-2$ for $h \geq 1$.

A side result of particular interest is given in Theorem~\ref{thm:scl}, where we compute the commutator length of powers of the Dehn twist about a boundary component, and in turn, the stable commutator length of such a Dehn twist. Namely we show:

\begin{theorem}
Let $g\geq 2$, $n\geq 1$ and let $\Sigma$ be a compact connected oriented surface of genus $g$ with boundary. If $\delta$ is one of the boundary components of $\Sigma$, then \linebreak the commutator length of $t_{\delta}^n$ is \, $\lfloor (|n|+3)/2 \rfloor$ \, and the stable commutator length of $t_{\delta}$ is $1/2$.
\end{theorem}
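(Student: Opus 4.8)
The plan is to deduce the theorem from two facts established earlier in the paper: the realization of $\Sigma_g$--bundles over $\Sigma_h$ with sections of every self-intersection between $2-2h$ and $2h-2$ (Theorem~1, proved in Section~\ref{Maximal} via explicit mapping class group relations), which supplies the upper bound, and the adjunction bounds of Proposition~\ref{adjunctionbounds}, which supply the lower bound. The bridge is the standard dictionary between commutator factorizations and surface bundles with sections: for $g\geq2$, expressing $t_{\delta_0}^{\,e}$ as a product of $h$ commutators in the mapping class group of a genus--$g$ surface with a single boundary component $\delta_0$ is equivalent to the existence of a $\Sigma_g$--bundle over $\Sigma_h$ carrying a section $S$ with $S\cdot S=-e$. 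Indeed, capping $\delta_0$ with a once--punctured disk kills $t_{\delta_0}$ and converts such a relation into a $\Sigma_g$--bundle over $\Sigma_h$ with a section through the puncture, while the exponent $e$ is, up to a universal sign, the obstruction to lifting the defining homomorphism back through the capping map, which is precisely the normal Euler number of that section, i.e.\ $S\cdot S$; conversely, running this construction backwards turns a bundle with a section of square $-e$ into such a factorization. Since the inverse of a product of $h$ commutators is again a product of $h$ commutators, $\cl(t_{\delta}^{\,n})=\cl(t_{\delta}^{\,-n})$, so we may assume $n\geq1$.

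Consider first the case of one boundary component. For the upper bound, set $h=\lfloor(n+3)/2\rfloor$; one checks directly that $2-2h\leq -n\leq 2h-2$ (in fact $2h-2$ equals $n$ when $n$ is even and $n+1$ when $n$ is odd), so Theorem~1 provides a $\Sigma_g$--bundle over $\Sigma_h$ with a section of self-intersection $-n$, whence $\cl(t_{\delta}^{\,n})\leq h=\lfloor(n+3)/2\rfloor$ by the dictionary. For the lower bound, suppose $t_{\delta}^{\,n}$ is a product of $h$ commutators; the dictionary produces a $\Sigma_g$--bundle over $\Sigma_h$ with a section $S$ satisfying $|S\cdot S|=n\geq1$. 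If $h=0$ the bundle over $S^2$ is trivial, so $S\cdot S=0$, a contradiction; if $h=1$, Proposition~\ref{adjunctionbounds} forces $S\cdot S=0$, again impossible; hence $h\geq2$, and Proposition~\ref{adjunctionbounds} gives $n=|S\cdot S|\leq 2h-2$, i.e.\ $h\geq\lceil(n+2)/2\rceil=\lfloor(n+3)/2\rfloor$. Therefore $\cl(t_{\delta}^{\,n})=\lfloor(n+3)/2\rfloor$, and consequently $\scl(t_{\delta})=\lim_{n\to\infty}\cl(t_{\delta}^{\,n})/n=\lim_{n\to\infty}\lfloor(n+3)/2\rfloor/n=1/2$.

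For an arbitrary compact surface $\Sigma$ of genus $g$ one of whose $b$ boundary components is $\delta$: capping the remaining $b-1$ boundary components with disks yields a surjection onto the mapping class group of a genus--$g$ surface with single boundary $\delta$ sending $t_{\delta}$ to $t_{\delta}$, and since commutator length does not increase under homomorphisms, $\cl_{\Sigma}(t_{\delta}^{\,n})\geq\lfloor(n+3)/2\rfloor$. For the reverse inequality, the commutator factorization of $t_{\delta}^{\,n}$ constructed above involves only finitely many curves, which can be isotoped off a collection of $b-1$ disjoint disks; the Dehn twists occurring then fix these disks pointwise, so the identical word realizes $t_{\delta}^{\,n}$ as $\lfloor(n+3)/2\rfloor$ commutators in the mapping class group of the surface obtained by deleting the disks, namely $\Sigma$ (equivalently, the $b-1$ extra sections of the associated bundle are flat, hence of square zero, and no spurious boundary twists appear). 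Combining the two bounds gives $\cl_{\Sigma}(t_{\delta}^{\,n})=\lfloor(n+3)/2\rfloor$ and $\scl_{\Sigma}(t_{\delta})=1/2$. The one slightly delicate point in all of this is pinning down the sign and normalization in the dictionary of the first paragraph --- that the exponent of $t_{\delta_0}$ is the self-intersection of the section itself rather than its negative, double, or a shift --- since it is exactly that normalization which produces the floor function; the degenerate base genera $h=0,1$ in the lower bound also require the separate arguments noted above, though these are easy.
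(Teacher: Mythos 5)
Your lower bound coincides with the paper's own argument: a factorization of $t_\delta^{n}$ into $h$ commutators yields a $\Sigma_g$--bundle over $\Sigma_h$ with a section of square $-n$, and Proposition~\ref{adjunctionbounds} forces $n\leq 2h-2$, i.e.\ $h\geq \lfloor (n+3)/2\rfloor$; the degenerate cases $h=0,1$ are as easy as you say, and the sign ambiguity you worry about is harmless because $\cl(t_\delta^{n})=\cl(t_\delta^{-n})$. Your upper bound, however, runs the paper's logic backwards. In the paper the factorization of $t_\delta^{n}$ into $\lfloor (n+3)/2\rfloor$ commutators is proved directly (Theorem~\ref{thm:ez-d^2n}, via the generalized lantern relation and Lemma~\ref{lem:comm}), and Theorem~\ref{maintheorem1} is then deduced from it; you instead cite Theorem~\ref{maintheorem1} and use the monodromy dictionary to recover a factorization in $\Gamma_g^1$. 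This is not circular (Theorem~\ref{maintheorem1} does not depend on Theorem~\ref{thm:scl}), but all of the actual content of the upper bound is hidden inside the theorem you quote, so as a proof it adds nothing beyond the citation; the paper's route, by contrast, produces the explicit relation that both theorems rest on.

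The genuine gap is the case of more than one boundary component, which is part of the statement and is precisely what gives the advertised contrast with Kotschick's vanishing result. Your dictionary only produces a factorization in $\Gamma_g^1$. The transfer to $\Gamma(\Sigma)$ with $b\geq 2$ boundary components by ``isotoping the curves off $b-1$ disks'' does not work: the commutator factors you obtain are arbitrary lifts of the bundle monodromies, not words in twists supported in a prescribed subsurface, and even after choosing representatives supported away from the disks the relation $t_\delta^{n}=\prod[\alpha'_j,\beta'_j]$ only survives modulo the kernel of the capping homomorphism $\Gamma_g^{b}\to\Gamma_g^{1}$, which is generated by boundary twists and disk-pushing maps and which you do not control. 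Equivalently, your parenthetical remark needs $b-1$ additional disjoint sections of square zero (with trivial framing) for the bundle supplied by Theorem~\ref{maintheorem1}, and you have not produced them; note that the multi-section part of that theorem requires $g\geq 8h-8$, so this is not automatic. Nor can the upper bound for $\Gamma(\Sigma)$ be obtained from the one-boundary case by a homomorphism: capping the remaining components maps $\Gamma(\Sigma)$ onto $\Gamma_g^{1}$ and gives only the lower bound (as you correctly use), since commutator length can only decrease under homomorphisms, and there is no embedded one-boundary subsurface of $\Sigma$ whose boundary is parallel to $\delta$ when $b\geq 2$. The paper sidesteps all of this because the curves and conjugating diffeomorphisms of Theorem~\ref{thm:ez-d^2n} are explicit and supported in an embedded configuration, so the same word works verbatim on a surface with any number of boundary components.
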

\noindent What follows is a stable calculation which is in great contrast with a result of Kotschick, who showed that the stable commutator length function vanishes when one considers the stable mapping class group of compact surfaces with \textit{one} boundary component~\cite{kot}. It follows from our result that the stable commutator length function does not vanish when one works with the stable mapping class group of compact surfaces with \textit{more than one} boundary component instead. These results are given in Subsection~\ref{sclresults}. Some computations and estimates on the commutator length and the stable commutator length of a (multi)twist were obtained in~\cite{KorkOzbag, EK, Kork, kot2, BraunKot}.

Lastly, we address the following question: For fixed fiber and base genera, is there an upper bound on the number of singular fibers of a relatively minimal Lefschetz fibration admitting a section of maximal self-intersection? As the maximal self-intersection number for a Lefschetz fibration (again, with critical points) over the $2$-sphere is $-1$ \cite{Smith, Stipsicz}, when the base genus $h=0$ this question is due to Ivan Smith (presented by Denis Auroux in \cite{Au}). For fiber genus $g \geq 2$ and base genus $h \geq 1$, we show that the answer to this question is negative by proving the following:
\begin{theorem}
Let $g\geq 2$ and $h\geq 1$ be fixed integers. For any positive integer $M$, there exists a relatively minimal genus--$g$ Lefschetz fibration over a surface of genus $h$ admitting a maximal section such that the number of critical points is greater than $M$.
\end{theorem}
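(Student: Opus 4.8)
The plan is to translate the statement into the mapping class group and then produce the fibrations by an algebraic construction. Recall the standard dictionary: a relatively minimal genus--$g$ Lefschetz fibration over $\Sigma_h$ equipped with a section of self-intersection $2h-2$ (the maximum allowed by the adjunction bound of Proposition~\ref{adjunctionbounds}) is equivalent to a relation
\[
t_{\gamma_1}t_{\gamma_2}\cdots t_{\gamma_c}=t_{\delta}^{2-2h}\prod_{j=1}^{h}[a_j,b_j]
\]
in the mapping class group $\mathrm{Mod}(\Sigma_{g,1})$ of a genus--$g$ surface with one boundary component $\delta$, where each $\gamma_i$ is essential (that no $\gamma_i$ be null-homotopic is precisely relative minimality; one may further arrange the $\gamma_i$ to be non-separating), the $h$ commutators record the base $\Sigma_h$, the power of $t_\delta$ records the self-intersection of the section, and $c$ is the number of critical points. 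Hence it suffices to exhibit, for every $M$, such a relation with $c>M$.

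I would take as the seed the $\Sigma_g$--bundles over $\Sigma_h$ carrying a maximal section produced in Section~\ref{Maximal}: such a bundle is exactly the $c=0$ case of the relation above, namely $\prod_{j=1}^{h}[a_j^{0},b_j^{0}]=t_\delta^{2-2h}$. The core of the argument is then an \emph{iterable positive substitution} in $\mathrm{Mod}(\Sigma_{g,1})$: a local identity — of lantern-relation type (the lantern relation $t_{d_1}t_{d_2}t_{d_3}t_{d_4}=t_{x}t_{y}t_{z}$ being the prototype) or of chain-relation type — that can be inserted into the positive part of a monodromy factorization so as to raise the number of Dehn twists, hence of critical points, by at least one, without altering either the mapping class represented or the $h$ commutators. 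One applies such a substitution repeatedly, starting from a fixed genus--$g$ Lefschetz fibration over $\Sigma_h$ with a maximal section (built directly, or obtained from the seed relation by a preliminary rewriting that installs a few positive twists), and checks at each step that the newly created vanishing cycles remain non-separating, so that relative minimality persists. Because the substitutions leave the represented element — in particular the power of $t_\delta$ — unchanged, every fibration so produced retains a section of self-intersection $2h-2$; translating the factorization with $c>M$ back through the dictionary completes the proof.

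The main obstacle is finding this substitution, and the difficulty is structural rather than technical. By Theorem~\ref{thm:scl} the commutator length of $t_\delta^{2-2h}$ in $\mathrm{Mod}(\Sigma_{g,1})$ equals exactly $h$, so the commutators supplied by the base $\Sigma_h$ are already fully consumed in realizing the maximal self-intersection, and the inserted positive twists must be absorbed at \emph{zero} additional commutator cost. This sharpness is exactly what defeats the naive strategy of fiber summing a surface bundle over $\Sigma_h$ with a maximal section against a genus--$g$ Lefschetz fibration over $S^2$ — the obvious unbounded source of critical points — for a Lefschetz fibration over $S^2$ admits no section of non-negative self-intersection, so the self-intersection of the glued section necessarily falls below $2h-2$. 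The construction must therefore be performed intrinsically inside $\mathrm{Mod}(\Sigma_{g,1})$; the hypothesis $g\ge 2$ enters both in furnishing the seed from Section~\ref{Maximal} and in providing enough room in the fiber for the non-separating lantern (or chain) configurations and for their iteration.
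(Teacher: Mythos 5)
Your reduction of the theorem to a mapping class group statement is the same as the paper's: one must write $t_\delta^{2-2h}$ in $\Gamma_g^1$ as a product of exactly $h$ commutators and more than $M$ right Dehn twists about essential curves, and you are right (via Theorem~\ref{thm:scl}) that the sharpness of $\cl(t_\delta^{2-2h})$ rules out the naive fiber-sum strategy. But at precisely this point your argument stops: the ``iterable positive substitution'' that is supposed to create the twists is never exhibited --- you yourself call finding it ``the main obstacle'' --- so the heart of the proof is missing. As a mechanism it is also problematic as stated: your seed factorization is a surface bundle, whose positive part is empty, so there is nothing to substitute into, and the unspecified ``preliminary rewriting that installs a few positive twists'' is exactly the nontrivial point, since every positive twist must be paid for without spending a single extra commutator.

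The paper's proof of Theorem~\ref{maintheorem2} resolves this by an explicit computation rather than by iterated substitutions. For $h\geq 2$, the generalized lantern relation gives $t_\delta^{2-2h}=C_{h-1}\cdots C_1 C$ with each factor a commutator and with $C=t_{a_1}t_{x_2}^{-1}t_{a_2}t_{x_1}^{-1}$ a product of twists about disjoint curves. The chain relation (Lemma~\ref{lem:chain}) applied to two chains on $\Sigma_g^1$ produces positive words $T_1$, $T_2$ satisfying $T_1^m=t_{x_2}^{m}t_{a_3}^{m}t_{a_1}^{-2m}$ and $T_2^m=t_{x_1}^{m}t_{a_1}^{m}t_{a_2}^{-m}t_{a_3}^{-m}$; inserting these into $C$ yields $C=C_0^{(m)}T_1^mT_2^m$, where $C_0^{(m)}$ is conjugate to $t_{x_1}^{-m-1}t_{x_2}^{-m-1}t_{a_1}^{m+1}t_{a_2}^{m+1}$ and is therefore a \emph{single} commutator by Lemma~\ref{lem:comm} --- the key point your outline lacks is exactly that $t_a^kt_b^{-k}t_c^kt_d^{-k}$ is one commutator for every $k$, so arbitrarily large corrections are absorbed at zero commutator cost. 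This expresses $t_\delta^{2-2h}$ as $h$ commutators times $4(2g+1)m$ positive twists about non-separating curves, with $m$ arbitrary; for $h=1$ the same chain-relation trick gives $1=C(l)(T(l))^{m}$. Without an explicit identity of this kind, your proposal is a correct strategy statement but not yet a proof.
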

\noindent These results are collected in Section~\ref{lfresults}.

\newpage
\section{Preliminaries}

\subsection{Lefschetz fibrations and monodromy representations.} \

In this article, all manifolds are assumed to be smooth and oriented, and all maps are assumed to be smooth. We denote by $\Sigma_{g,r}^s$ a compact oriented surface of genus $g$ with $s$ boundary components and $r$ marked points in the interior. The \emph{mapping class group\,}, $\Gamma_{g,r}^s$, of  $\Sigma_{g,r}^s$ is the group of isotopy classes of orientation-preserving self-diffeomorphisms of $\Sigma_{g,r}^s$ fixing $r$ marked points and the points on the boundary. The isotopies of $\Sigma_{g,r}^s$ are assumed to fix the marked points and the points on the boundary. For simplicity, we write $\Sigma_{g,r} = \Sigma_{g,r}^0$, $\Sigma_{g}^s = \Sigma_{g,0}^s$ and $\Sigma_{g} = \Sigma_{g,0}^0$. We also use the similar simplified notation for the corresponding mapping class groups.

We start with reviewing some basic definitions and properties of Lefschetz fibrations and surface bundles over surfaces.

Let $X$ and $\Sigma$ be compact connected manifolds of dimensions four and two, respectively. A \emph{Lefschetz fibration} is a map $f\colon X\to\Sigma$ such that $f^{-1} (\partial \Sigma)=\partial X$, the set $C =\{p_1,p_2,\ldots,p_k \}$ of critical points of $f$ lies in the interior of $X$, and that around each $p_i$ and $f(p_i)$ there are orientation-preserving charts making $f$ conform to the complex model $f(z_1, z_2)=z_1 z_2$. The genus $g$ of a regular fiber $F \cong \Sigma_g$ of $f$ is called the \emph{genus of the fibration}. We will assume that the critical points lie in different fibers, called \emph{singular fibers}, which can be achieved after a small perturbation of any given Lefschetz fibration. When there are no critical points, $f: X \to \Sigma$ is nothing but a surface bundle over a surface, so $f$ always restricts to a surface bundle over $\Sigma \setminus f(C)$ on $X \setminus f^{-1}(f(C))$ and, in particular, over $\partial \Sigma$ on $\partial X$. Below, whenever we talk about a Lefschetz fibration, we will assume that the critical locus is non-empty, so as to make a clear distinction from surface bundles over surfaces.

A singular fiber is called \emph{reducible} if the complement of the critical point in the fiber is disconnected, and is called \emph{irreducible} otherwise. Lefschetz singularities locally correspond to $2$-handle attachments to $D^2 \x F$ with framing $-1$ with respect to the fiber framing, where the attaching circles of these $2$-handles are embedded curves in a regular fiber $F$ and are called \emph{vanishing cycles}. With this in mind, a reducible (resp. irreducible) singular fiber is given by a separating (resp. non-separating) vanishing cycle on $F$. So a reducible fiber consists of two surfaces of self-intersection $-1$ intersecting each other at the critical point. If one of these two surfaces is a $2$-sphere, that is, if the vanishing cycle is null-homotopic on $F$, then one gets a new Lefschetz fibration by blowing-down this sphere without altering the rest of the fibration, and vice versa. Therefore, we may consider only \emph{relatively minimal} Lefschetz fibrations, i.e. fibrations which do not contain any sphere of self-intersection $-1$ in its fibers.

Lefschetz fibrations can be described combinatorially by means of their monodromy. For a smooth surface bundle $f\colon E\to \Sigma$ with fibers diffeomorphic to $F$, the {\em monodromy representation} of $f$ is defined to be the map $\Psi \colon \pi _1(\Sigma)\to \Gamma _g$  relative to a fixed identification $\varphi$ of $F$ with the fiber over the base point of $\Sigma$: For each loop $\gamma \colon I\to \Sigma$ the bundle $f_\gamma \colon \gamma^* (E)\to I$ is canonically trivial, inducing a diffeomorphism $f_\gamma^{-1}(0)\to f_\gamma^{-1}(1)$ up to isotopy. Using $\varphi$ to identify $f_\gamma^{-1}(0)$ and $f_\gamma^{-1}(1)$ with $F$, we get the element $\Psi (\gamma)\in \Gamma _g$. Changing the identification $\varphi$ changes $\Psi$ by a conjugation with an element of $\Gamma_g$. Here the map $\Psi \colon \pi _1(\Sigma)\to \Gamma _g$ is an antihomomorphism rather than a homomorphism, because for the multiplication in the mapping class group we use the functional notation, i.e. for $f_1,f_2\in\Gamma_g$, the product $ f_1f_2$ means that we first apply $f_2$ and then $f_1$.

For a relatively minimal, genus--$g$ Lefschetz fibration $f \colon X\to\Sigma$ with a regular fiber $F$, we define the \emph{monodromy representation} (or simply \emph{monodromy}) to be the \emph{monodromy factorization} of the fiber bundle $X \setminus f^{-1}(Q)\to \Sigma \setminus Q$, where $Q=f(C)$ is the set of critical values. For $f \colon X\to \Sigma$ as above, the monodromy representation $\Psi \colon \pi_1 (\Sigma \setminus Q) \to \Gamma _g$ determines $f$ up to isomorphism, except in the cases of sphere and torus bundles over closed surfaces. This is due to the fact that for $g\geq 2$ the space of self-diffeomorphisms of $F$ isotopic to the identity is contractible.


It turns out that the monodromy of a Lefschetz fibration $f\colon X\to D^2$ over the disk with a single
critical point is a right Dehn twist\index{Dehn twist} along the vanishing cycle corresponding to the singular
fiber. Therefore, the monodromy of a Lefschetz fibration $f\colon X \to \Sigma _h$ is given by a factorization of the identity element $1\in \Gamma _g$ as
\begin{equation} \label{monodromypresentation}
1=\prod _{i=1}^n t_{v_i}\prod _{j=1}^h [\alpha_j, \beta_j] \, ,
\end{equation}
where $v_i$ are the vanishing cycles of the singular fibers and $t_{v_i}$ is the \textit{right} (or \textit{positive})  Dehn twist about $v_i$. This factorization of the identity is called the \emph{monodromy factorization}. In particular, for $F = \Sigma_g$, a product $\prod_{i=1}^h [a_i, b_i]$ of $h$ commutators in $\Gamma_g$ gives a $\Sigma_g$--bundle over the surface $\Sigma_h^1$
of genus $h$ with one boundary component. The mapping classes $a_i$ and $b_i$ specify the monodromies along
a free generating system $\langle \alpha_1, \beta_1, \ldots , \alpha_h, \beta_h \rangle$ of $\pi_1 (\Sigma_h^1)$
such that $\prod_{i=1}^h [\alpha_i, \beta_i]$ is parallel to the boundary component of $\Sigma_h^1$. If $\prod_{i=1}^h [\alpha_i, \beta_i]=1$ in $\Gamma_g$, we get a $\Sigma_g$--bundle $X\to \Sigma_h$. The bundle is uniquely determined by this factorization of the identity once $g\geq 2$.

Conversely, a product $\prod_{i=1}^k t_i\in \Gamma_g$ with $t_i$ right Dehn twists provides a genus--$g$ Lefschetz fibration $X\to D^2$ over the disk with fiber $F \cong \Sigma_g$. So if $\prod_{i=1}^kt_i=1$ in the mapping class group $\Gamma_g$, then the fibration closes up to a fibration over the sphere $S^2$ and the closed--up manifold is uniquely determined by the word $\prod_{i=1}^k t_i$ once $g\geq 2$.  By combining the above two constructions, a word
\[
w=\prod_{i=1}^{k'} t_i \prod_{j=1}^h [\alpha_i, \beta_i] \]
gives a Lefschetz fibration over $\Sigma_h \setminus D^2$, and if $w=1$ in $\Gamma_g$ we get a Lefschetz fibration $X\to \Sigma_h$.

For a Lefschetz fibration or a surface bundle $f: X\to \Sigma$, a map $\sigma  \colon \Sigma  \to X$ 
is called a \emph{section} if $f \circ \sigma = id_{\Sigma}$. Suppose that a fibration $f\colon X\to \Sigma$ 
admits a section $\sigma$. Set $S = \sigma(\Sigma) \subset X$. We will also say that $S$
is a \emph{section} of $f$. This section $S$ provides a lift of the 
representation $\Psi$ from $\pi_1 (\Sigma \setminus Q)$ to the mapping class group $\Gamma _{g,1}$. 
One can then fix a disk neighborhood of this section preserved under the monodromy, and get a lift to $\Gamma_g^1$.

Conversely, every such representation with a lift determines a fibration with a section: Gluing a disk with a marked point to a surface with one boundary component along the boundary and by extending self-diffeomorphisms of the surface by the identity on the disk, we obtain a surjective homomorphism $p\colon \Gamma ^1_g\to \Gamma _{g,1}$. It is well known that $\ker p$ is isomorphic to $\Z$, generated by the right Dehn twist $t_\delta$ along a simple closed curve $\delta$ parallel to the boundary. If the factorization
 \[
 1=\prod _i t_{v_i} \prod _j [\alpha_j, \beta_j]
 \]
lifts from $\Gamma _g $ to $\Gamma _{g,1} $ as a factorization of $1$ in the latter group in the similar form, then the corresponding fibration has a section. Moreover, if we lift this product to $\Gamma _g^1$ we get
  \[
  t_\delta ^m=\prod _i t_{v'_i} \prod _j [\alpha'_j, \beta'_j]
   \]
for some $m$. Here, $t_{v'_i}$ is a Dehn twist mapped to $t_{v_i}$ under $\Gamma _g^1 \to \Gamma _g$. Similarly, $\alpha_j'$ and $\beta'_j$ are mapped to $\alpha_j$ and $\beta_j$, respectively. An elementary observation is that the power $m$ of $t_{\delta}$ in the above factorization in $\Gamma _g^1$ is the negative of the self-intersection number of the section $S$ that we obtain. (See for instance \cite{Smith}.)


\subsection{Background results} \

There are three basic operations we are going to use to construct new surface bundles and Lefschetz fibrations from given ones:

\noindent \rm{(1)}
Let $f: X \to \Sigma_h$ be a Lefschetz fibration with regular fiber $\Sigma_g$. If $\lambda$ is an \textit{orientation-preserving} self-diffeomorphism of $\Sigma_h$, then $\lambda \circ f : X \to \Sigma_h$ is also a Lefschetz fibration with regular fiber $\Sigma_g$. If $f: X \to \Sigma_h$ is a $\Sigma_g$--bundle, then one can take the diffeomorhism $\lambda$ above to be orientation-reversing as well to get a new $\Sigma_g$--bundle $\lambda \circ f : X \to \Sigma_h$. \\

\noindent \rm{(2)}
For $i=1,2$, let $f_i: X_i \to \Sigma_{h_i}$ be two genus--$g$ Lefschetz fibrations. We can then remove a fibred neighborhood of a regular fiber $\Sigma_g$ from each fibration and glue the resulting $4$-manifolds along their boundaries using a fiber--preserving and orientation-reversing diffeomorphism $\phi$ of $S^1 \x \Sigma_g$ to get a new oriented $4$-manifold $X$. The result is a new genus-$g$ Lefschetz fibration
$f = f_1 \#_{\phi} f_2: X  \to \Sigma_{h_1+h_2}$, called the \emph{fiber sum} of $f_1$ and $f_2$. Moreover, if $S_i$ is a section of $f_i$ with self-intersection $k_i$ for $i=1,2$, then one can perform this fiber sum
operation so that there is a section $S_1 \# S_2$, restricting to $S_i$ on each fiber sum component, of the fibration $f = f_1 \#_{\phi} f_2: X  \to \Sigma_{h_1+h_2}$ with self-intersection $k= k_1 + k_2$. In this case, we say that the pair $(f,S)$ is the \emph{fiber sum} of $(f_1, S_1)$ and $(f_2, S_2)$. \\

\noindent \rm{(3)}
Let $f_i: X_i \to \Sigma_{h}$ be a genus--$g_i$ Lefschetz fibration with the regular fiber
$\Sigma_{g_i}$ with a self-intersection zero section $S_i$ for each $i=1,2$. We can then remove a $D^2$ fibered
neighborhood of each $S_i$ in $X_i$ and glue the resulting $4$-manifolds along their boundaries using a fiber
preserving orientation-reversing diffeomorphism $\phi$ of $S^1 \x \Sigma_h$. The result is a new Lefschetz
fibration $f = f_1 \#_{\phi} f_2: X \to \Sigma_h$ with the regular fiber $\Sigma_{g_1+g_2}$.


The constructions \rm{(2)} and \rm{(3)} above are both instances of the generalized fiber sum construction, although
they appear to be ``orthogonal'' to each other in nature. When the fibers are homologically essential (in particular when the fiber genus $g \neq 1$), these Lefschetz fibrations can be equipped with symplectic forms that make the fibers \textit{and} any prescribed finite collection of disjoint sections symplectic, allowing us to perform this generalized fiber sum construction symplectically and handing us a new symplectic Lefschetz fibration at the end. It is not hard to express all these operations in terms of factorizations and lifts in appropriate mapping class groups.


Next proposition prescribes the \emph{adjunction bounds} for self-intersection numbers
of sections of surface bundles and of Lefschetz fibrations:

\begin{prop} [Adjunction bounds] \label{adjunctionbounds}
Let $f: X \to \Sigma_h$ be a surface bundle or \linebreak a Lefschetz fibration with regular fiber $\Sigma_g$, and $g, h \geq 1$. Suppose that $f$ admits a section $S$. Then the self-intersection of $S$ satisfies $[S]^2 \leq 2h-2$. If $f: X \to \Sigma_h$ is a $\Sigma_g$ bundle, then this bound improves to $| \, [S]^2 \, | \leq 2h-2 $.
\end{prop}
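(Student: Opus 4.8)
The plan is to make the section a symplectic submanifold and then apply the Seiberg--Witten adjunction inequality to it. The starting observation is that a section $S$ meets every fiber $F$ transversely in one point, so $[F]\cdot[S]=1$ and in particular $[F]$ is not torsion in $H_2(X;\Z)$. Consequently, whether $f$ is a surface bundle or a Lefschetz fibration with non-empty critical locus, the Thurston--Gompf construction equips $X$ with a symplectic form for which every fiber is symplectic; and since $S$ meets each fiber once and positively, Gompf's refinement of this construction lets us choose the form so that $S$ itself is symplectic.

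Next I would invoke the adjunction inequality for the symplectic $4$-manifold $X$. Because $S\cong\Sigma_h$ is a connected symplectic surface of genus $h\ge1$, the adjunction equality for symplectic surfaces gives $K_X\cdot[S]=2h-2-[S]^2$, so the inequality $[S]^2\le 2h-2$ is equivalent to the positivity $K_X\cdot[S]\ge 0$. If $[S]^2\ge0$, this positivity is exactly what the Seiberg--Witten adjunction inequality $2g(S)-2\ge[S]^2+|K_X\cdot[S]|$ provides, the canonical class of a symplectic manifold being a basic class (Taubes). If $[S]^2<0$, there is nothing to prove, as $[S]^2<0\le 2h-2$ when $h\ge1$. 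In either case $[S]^2\le 2h-2$, and this reasoning is valid for surface bundles and for Lefschetz fibrations alike.

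For the improved lower bound when $f$ is a genuine $\Sigma_g$--bundle, I would repeat the argument on the orientation-reversed total space. Reversing the orientation of the base $\Sigma_h$ turns $f$ into a $\Sigma_g$--bundle $\bar f\colon\bar X\to\Sigma_h$ whose total space carries the opposite orientation (this is operation (1) with an orientation-reversing base diffeomorphism, and it is available precisely because there are no Lefschetz critical points --- the local model $z_1z_2$ is not preserved under reversal of orientation, which is exactly why one should not expect the two-sided bound for fibrations with critical points). The section $S$ is still a section of $\bar f$, now with self-intersection $-[S]^2$, and $[F]$ remains non-torsion, so the upper bound just established, applied to $\bar f$, gives $-[S]^2\le 2h-2$. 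Hence $|\,[S]^2\,|\le 2h-2$.

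The chief difficulties are technical calibrations rather than new ideas. The first is checking that Gompf's construction can genuinely be arranged so as to make the section, and not merely the fibers, symplectic. The second is the customary $b_2^+=1$ caveat attached to the adjunction inequality: one should note that $X$ is not rational, since $h\ge1$ forces $\pi_1(X)$ to surject onto $\pi_1(\Sigma_h)\neq 1$, and treat the ruled case --- which an Euler-characteristic count confines to $g=1$, and which is in any event subsumed by the companion analysis of elliptic surface bundles --- separately, so that the adjunction inequality for the canonical class applies. The third is the orientation bookkeeping in the bundle case: confirming that reversing the base orientation reverses the orientation of the total space while leaving it a surface bundle over $\Sigma_h$.
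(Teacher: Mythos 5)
Your upper-bound argument coincides with the paper's in the generic case (Taubes' nonvanishing for the canonical class plus the adjunction inequality, with the observation that $[S]^2<0$ is vacuous), and your orientation-reversal trick for the two-sided bound in the bundle case is the geometric counterpart of the paper's ``reflection'' of the monodromy relation, so those parts are sound. The genuine gap is your treatment of the case $b^+(X)=1$. The adjunction inequality for Seiberg--Witten basic classes is a $b^+>1$ statement; when $b^+=1$ the invariants are chamber-dependent, and ``$X$ is not rational or ruled, hence adjunction applies'' is not the customary caveat you present it as but a substantially harder assertion (a generalized adjunction formula in the spirit of Li--Liu), which you neither prove nor identify, and whose hypotheses (in particular minimality, which you never establish --- the paper gets it by blowing down to a relatively minimal fibration and quoting \cite{Stipsicz0}, or from asphericity for bundles) you do not check. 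This case cannot be waved away: it is exactly where the content lies, since there the bound to be proved ends up being $[S]^2\le 0$, so your ``nothing to prove when $[S]^2<0$'' reduction does not help. The paper instead handles $b^+=1$ by a different mechanism: Liu's inequality $c_1^2\ge 0$ for minimal, non-ruled symplectic $4$-manifolds with $b^+=1$ \cite{Liu}, combined with $b_1\ge 2h$ and $b^-\ge 1$, forces $h=1$, $\eu(X)=0$ and $m=0$, so one is reduced to an honest $\Sigma_g$--bundle over $T^2$; the conclusion $[S]^2=0$ is then obtained algebraically from the Endo--Kotschick theorem that no nontrivial power of a Dehn twist about a separating curve is a commutator \cite{EK}. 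Nothing in your sketch substitutes for this step.

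Two further inaccuracies sit inside that sketch. First, the Euler-characteristic count does not confine the ruled possibility to $g=1$: as in the paper, it forces $m=0$ and base genus $h=n=1$ with the fiber genus $g$ arbitrary, and one then needs $\pi_2(X)=0$ (asphericity of surface bundles with $g,h\ge 1$) to exclude ruledness; so deferring to the elliptic ($g=1$) analysis does not cover it. Second, in the bundle case be careful with operation (1) as literally stated: composing $f$ with an orientation-reversing diffeomorphism of the base while keeping the orientation of $X$ does not change $[S]^2$; what you need is to reverse the orientations of both $X$ and the base (so the fibers keep theirs), which is what produces a $\Sigma_g$--bundle with a section of square $-[S]^2$ --- this is exactly the paper's inverted relation $[\beta_h,\alpha_h]\cdots[\beta_1,\alpha_1]=t_\delta^{k}$ in $\Gamma_g^1$. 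Finally, making $S$ symplectic via Gompf's refinement is unnecessary for your argument: applying the adjunction inequality directly to the embedded surface $S$ of genus $h$ already gives $[S]^2\le 2h-2$ when $[S]^2\ge 0$, which is how the paper proceeds.
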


\begin{proof}
If $f$ is a surface bundle over a surface with positive fiber and base genera, we easily deduce from the homotopy exact sequence of a fibration that the total space $X$ is acyclic, and in particular minimal. On the other hand, if $f$ is a Lefschetz fibration which is not relatively minimal, then we can always blow-down the exceptional spheres on the fibers and pass to a relatively minimal genus $g$ Lefschetz fibration over a genus $h$ surface. Since a section intersects each fiber positively at one point, it either intersects each one of these exceptional spheres positively once or misses it. We therefore obtain a relatively minimal Lefschetz fibration with a section whose self-intersection number is greater than or equal to the self-intersection number of the original section. Thus, it suffices to prove the proposition for a relatively minimal Lefschetz fibration $f$, which we will assume from now on. Then it follows from \cite{Stipsicz0} that the total space of this fibration is minimal. To sum up: in all cases we can assume that we are working with a minimal $X$.

Since the fibration admits a section, the fibers are homologically essential, allowing us to equip $X$ with
a symplectic form using Gompf-Thurston construction. We therefore have a minimal symplectic $4$-manifold $X$ in hand.
In particular, $b^+(X) \geq 1$. 

\noindent \textit{The case $b^+(X) > 1$}:
The proof in this case will follow at once
from an application of the adjunction inequality for Seiberg-Witten basic classes.
As shown by Taubes, the canonical class $K$ of any symplectic form we choose on $X$ is a Seiberg-Witten basic class.
Applying the adjunction inequality we get
\[ -\chi(S) \geq [S]^2 + | K \cdot S | \, . \]
It follows that $2h-2 \geq [S]^2$, proving the first claim of the proposition in this case.

\noindent \textit{The case $b^+(X) = 1$}:
First observe that if $F \cong \Sigma_g$ is a regular fiber, then for any $r > \frac{1}{2}|[S]^2|$ the class $r[F]+ [S]$
has positive square, whereas $r[F]-[S]$ has negative square. So $b^-(X)\geq 1$. Moreover, from the exact sequence
\[
\pi_1(F) \rightarrow \pi_1(X) \stackrel{f_{*}}{\rightarrow} \pi_1(\Sigma_h) \rightarrow 1 \, ,
\]
we get $b_1(X) \geq b_1(\Sigma_h) = 2h$. (See for instance ~\cite{GS}.)

Second, we show that $X$ is not ruled. From $b_1(X) \geq 2h$, it is fairly easy to see that $X$ can only be a ruled surface over $\Sigma_n$ with $n \geq h$. In this case, from the Euler characteristic calculation $4(g-1)(h-1) + m =  4-4n$, where $m$ is the number of Lefschetz critical points, we get
\[
0 \leq \frac{m}{4} = -gh +g+h -n \leq g(1-h)\leq 0 \, .
\]
Hence, $m=0$ (and $h=n=1$), i.e. $f$ should be a surface bundle over a surface. Since the base and the fiber genera are positive, this in turn implies that $\pi_2(X)=0$, so $X$ can not be ruled.

Since $X$ is a minimal symplectic $4$-manifold with $b^+(X)=1$ and since it is not (irrational) ruled,
as shown by Liu in \cite{Liu}, we have the inequality
\begin{equation*}
0 \leq c_1^2(X)= 2 \, \eu(X) + 3 \, \sign(X) \, ,
\end{equation*}
where $\eu(X)$ and $\sign(X)$ are the Euler characteristic and the signature of $X$, respectively.
From this equation we derive that
\[ 0\leq 4 - 4 b_1(X) +5 b^+(X) - b^-(X) = 9 - 4 b_1(X)  - b^-(X). \]
Therefore, we get
\[ 0 \leq 8h + b^-(X) \leq 4 b_1(X)  + b^-(X)\leq 9 \, , \]
giving us $b_1(X)=2, h=1$ and $b^-(X)=1$. In particular, the Euler characteristic of $X$ is
$ \eu(X)= 0$. Calculating the Euler characteristic from the handle decomposition of $X$ we have
\[ 4(g-1)(h-1) + m = 0 \, ,\]
where $m$ is the number of singular fibers of the Lefschetz fibration $f$ (considering the surface bundle as
a Lefschetz fibration with no singular fiber). We conclude that $m=0$, that is, we have an honest
$\Sigma_g$--bundle over the torus $T^2$.

Let $[S]^2=s$. This hands us a relation $t_\delta ^{-s}=[\alpha', \beta']$ in
the mapping class group $\Gamma_g^1$ of a surface $\Sigma_g^1$ with one boundary component,
which is a lift of the monodromy factorization $[\alpha, \beta]=1$ in $\Gamma_g$ of the surface bundle $f$.
Here $\delta$ is the boundary curve $\Sigma_g^1$. Gluing a torus with one boundary component to $\Sigma_g^1$ along $\delta$ gives
an embedding $\Sigma_g^1 \hookrightarrow \Sigma_{g+1}$, which in turn
induces an injection $\Gamma_g^1 \hookrightarrow \Gamma_{g+1}$. Thus we get the relation
$t_{\delta'}^{-s}= [\alpha'', \beta'']$ in $\Gamma_{g+1}$, where $\delta'$ is a homotopically
nontrivial separating simple closed curve on $\Sigma_{g+1}$. However, it was shown by Endo and Kotschick in \cite{EK} that no nontrivial power of a Dehn twist along a separating simple closed curve is a commutator. Therefore, $[S]^2=0$, concluding the proof of the first claim.

If $f$ is an honest $\Sigma_g$--bundle over $\Sigma_h$ with a section $S$ of self-intersection $k$, then we have
\[ [\alpha_1, \beta_1][\alpha_2, \beta_2]\cdots [\alpha_h, \beta_h] = t_{\delta}^{-k} \]
in $\Gamma_g^1$, where $\delta$ is the boundary component.
By inverting this equation we obtain the relation
\[ [\beta_h,\alpha_h] \cdots [ \beta_2,\alpha_2] [\beta_1,  \alpha_1] = t_{\delta}^k \]
which gives another $\Sigma_g$--bundle over $\Sigma_h$, namely the \textit{reflection} of $f$. This bundle
have a section of self-intersection $-k$. By the result above $ -k \leq 2h-2$, thus $ |[S]^2|\leq 2h-2 $, concluding the proof. 
\end{proof}

While circulating an earlier version of this article, we found out that such an adjunction bound in
the case of \textit{surface bundles} of fiber and base genera $g, h \geq 2$ was independently obtained
by Bowden \cite{Bow}, where the author studies multisections of surface bundles.

Note that when we have a Lefschetz fibration, there is indeed no lower bound on the self-intersection
of a section. This can for instance be seen by taking fiber sums with appropriate Lefschetz fibrations
over the $2$-sphere with negative square sections while patching the sections.

When the base genus is $h=0$, a maximally self-intersecting section for a Lefschetz fibration (again with
non-empty critical locus) has self-intersection $-1$, as shown in \cite{Smith, Stipsicz}. In the light of
above proposition, we make the following definition:

\begin{defn}
Let $f: X \to \Sigma_h$ be a Lefschetz fibration with the regular fiber $\Sigma_g$ and let $S$ be a section
of this fibration. Then $S$ is called a \emph{maximally self-intersecting section}, or a \emph{maximal section}
in short, if $[S]^2 = 2h-2$ when $h \geq 1$, and $[S]^2= -1$ when $h =0$.
\end{defn}

Let us now define:

\begin{defn}[Fiber sum indecomposability for pairs] \label{fibersumindecomposable}
Let $f$ be a surface bundle or a Lefschetz fibration over $\Sigma_{h}$ with the regular fiber $\Sigma_{g}$ and let $S$ be a section of this fibration. If $(f,S)$ can be expressed as the fiber sum of $(f_1, S_1)$ and $(f_2, S_2)$ for some surface bundles or Lefschetz fibrations $f_i: X_i \to \Sigma_{g_i}$ with sections $S_i$ such that neither one of $(f_i, S_i)$ is the trivial $\Sigma_g$--bundle over $S^2$, then $(f,S)$ is said to be
\emph{fiber sum decomposable}. The pair $(f, S)$ is called \emph{fiber sum indecomposable} otherwise. Fiber sum indecomposability for a fibration $f$ alone (without any mentioning of a section) is defined similarly.
\end{defn}

\begin{proposition} \label{fibersumindecomposablity}
Let $g, h \geq 1$. If $f: X \to \Sigma_h$ is a surface bundle or a Lefschetz fibration with the regular fiber $\Sigma_g$ admitting a maximal section $S$, then the pair $(f, S)$ is fiber sum indecomposable.
\end{proposition}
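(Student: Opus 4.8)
The plan is to argue by contradiction on the additivity of the self-intersection number under fiber sum. Suppose $(f,S)$ is fiber sum decomposable, so $(f,S)$ is the fiber sum of $(f_1,S_1)$ and $(f_2,S_2)$ with $f_i\colon X_i\to\Sigma_{h_i}$ genus--$g$ fibrations (surface bundles or Lefschetz fibrations), $h=h_1+h_2$, and neither $(f_i,S_i)$ is the trivial $\Sigma_g$--bundle over $S^2$. By the additivity recorded in operation (2) of the Background results, $[S]^2=[S_1]^2+[S_2]^2$. Since $S$ is maximal and $g,h\geq 1$, the left-hand side equals $2h-2=2h_1+2h_2-2$.

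The key step is to bound each $[S_i]^2$ from above by the adjunction bound applicable to $(f_i,S_i)$, and here I must be careful about which component can have $h_i=0$. If both $h_i\geq 1$, then Proposition~\ref{adjunctionbounds} gives $[S_i]^2\leq 2h_i-2$ for $i=1,2$, and summing yields $[S]^2\leq (2h_1-2)+(2h_2-2)=2h-4<2h-2$, a contradiction. So at least one base genus is zero; say $h_2=0$, hence $h_1=h\geq 1$. Now $(f_2,S_2)$ is a genus--$g$ fibration over $S^2$. If $f_2$ is a genuine Lefschetz fibration (non-empty critical locus), then its maximal section self-intersection is $-1$ by \cite{Smith,Stipsicz}, so $[S_2]^2\leq -1$; combined with $[S_1]^2\leq 2h_1-2=2h-2$ (from Proposition~\ref{adjunctionbounds}, as $h_1\geq 1$) we again get $[S]^2\leq 2h-3<2h-2$, a contradiction. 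If instead $f_2$ has empty critical locus, it is a $\Sigma_g$--bundle over $S^2$; as recalled in the Introduction, for $g\geq 1$ the only such bundle is the trivial one $\Sigma_g\times S^2\to S^2$, and its sections all have self-intersection $0$, so $[S_2]^2=0$. But then $(f_2,S_2)$ is the trivial $\Sigma_g$--bundle over $S^2$ — with $S_2$ a section of square $0$, which is exactly the excluded case in Definition~\ref{fibersumindecomposable}. (If $g=1$, one must note additionally that every section of $\Sigma_1\times S^2\to S^2$ still has square $0$; and if one worries about $g=0$, it is excluded by hypothesis $g\geq 1$.) In every case we reach a contradiction, so $(f,S)$ is fiber sum indecomposable.

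The main obstacle I anticipate is the bookkeeping at the $h_i=0$ boundary case: one has to separate the possibilities that the $S^2$--side is a true Lefschetz fibration (use the $-1$ bound of Smith and Stipsicz) versus an honest bundle (use uniqueness of $\Sigma_g\times S^2$ and the fact that its sections are all of square $0$, then invoke the explicit exclusion in the definition of fiber sum decomposability). A secondary subtlety is making sure the additivity statement from operation (2) applies verbatim — it does, since it is stated there for sections $S_i$ of arbitrary self-intersection $k_i$ with $[S]^2=k_1+k_2$ — and that the convention "Lefschetz fibration means non-empty critical locus" is used consistently when distinguishing the two sub-cases of the $S^2$--side. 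Everything else is a one-line inequality chase.
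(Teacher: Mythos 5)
Your proposal is correct and follows essentially the same route as the paper: additivity of $[S]^2$ under fiber sum, the adjunction bound of Proposition~\ref{adjunctionbounds} applied to any piece with positive base genus, and the Smith--Stipsicz bound (together with triviality of $\Sigma_g$--bundles over $S^2$ admitting a section) to force the sphere-base piece to be the excluded trivial bundle. Your treatment of the $h_i=0$ sub-cases is just a slightly more explicit spelling-out of the paper's one-line conclusion that $[S_1]^2=0$ forces $(f_1,S_1)$ to be trivial.
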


\begin{proof}
Let $(f,S)$ be the fiber sum of two genus--$g$ fibrations $(f_1,S_1)$ and $(f_2, S_2)$ over surfaces of genera $h_1$ and $h_2$, respectively. Assume that $0 < h_1 \leq h_2 <h$. We have $[S_1]^2 + [S_2]^2 = [S]^2 = 2h-2$ and $h_1 + h_2 =h$. However, by Proposition \ref{adjunctionbounds}, $[S_i]^2 \leq 2h_i -2$ for $i=1,2$, yielding a contradiction. So now assume that $h_1 =0$ and $h_2 =h$. Once again by Proposition \ref{adjunctionbounds}, $[S_2]^2 \leq 2h-2$. It follows that $[S_1]^2 =0$, so $f_1$ is the trivial $\Sigma_g$--bundle \cite{Smith, Stipsicz}. Thus $(f,S)$ is fiber sum indecomposable.
\end{proof}

\begin{rmk} \label{fibersumpositivebasegenus}
If the section $S$ has self-intersection $2h-3$, that is one less than the maximum possible self-intersection, then the first part of the above proof still works and shows that \emph{the pair} $(f,S)$ cannot be decomposed as fiber sum of two Lefschetz fibrations over surfaces. However, we shall note that this does not exclude the possibility of the fibration $f$ decomposing as a non-trivial fiber sum; our observation here points out when a fibration \emph{and} a section are \emph{not} obtained from fiber summing non-trivial fibrations while pacing their sections.
\end{rmk}




Lastly, the following proposition will be handy when addressing the main questions of our paper in the case of fiber genus $g=1$:

\begin{proposition} \label{propg=1}
Let $\Sigma_1^1$ be a torus with one boundary component $\delta$ and let $n$ be an integer. In the mapping class group $\Gamma_{1}^1$ of $\Sigma_{1}^1$,
\begin{itemize}
  \item if $n<0$ then $t_\delta^n$ cannot be written as a product of commutators and right nonseparating Dehn twists,
  \item if $n\geq 0$ and if $t_\delta^n$ is a product of commutators and $m$ right nonseparating Dehn twists, then $m=12n$.
\end{itemize}
\end{proposition}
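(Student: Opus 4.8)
The plan is to exploit the well-understood structure of $\Gamma_1^1$, and in particular its abelianization. Recall that $\Gamma_1^1$ is generated by the right Dehn twists $t_a$ and $t_b$ about two nonseparating simple closed curves $a$ and $b$ meeting transversally in a single point, and that (this is classical; indeed $\Gamma_1^1\cong B_3$) the braid relation $t_at_bt_a=t_bt_at_b$ is a defining relation. Abelianizing kills this relation, so $H_1(\Gamma_1^1;\Z)\cong\Z$, and there is a surjective homomorphism $\phi\colon\Gamma_1^1\to\Z$ with $\phi(t_a)=\phi(t_b)=1$. By the change of coordinates principle any nonseparating simple closed curve on $\Sigma_1^1$ is carried to $a$ by some element of $\Gamma_1^1$, so every right Dehn twist $t_c$ about a nonseparating curve is conjugate to $t_a$ and hence $\phi(t_c)=1$.

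Next I would evaluate $\phi$ on $t_\delta$. The curves $a$ and $b$ form a $2$-chain whose regular neighbourhood is all of $\Sigma_1^1$, so the chain relation gives $t_\delta=(t_at_b)^6$ in $\Gamma_1^1$, whence $\phi(t_\delta)=12$. The exponent and the sign here can be confirmed through the projection $\Gamma_1^1\to\Gamma_{1,1}\cong SL(2,\Z)$ of the preliminaries, under which $t_at_b$ maps to an element of order exactly $6$ while the kernel is the central subgroup generated by $t_\delta$.

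Finally, suppose $t_\delta^n$ is a product, in some order, of $m$ right Dehn twists about nonseparating curves and finitely many commutators in $\Gamma_1^1$. Applying $\phi$, the commutators contribute $0$ and each twist contributes $1$ (the target being abelian, the order of the factors is irrelevant), so $12n=m$. Since $m\geq0$ this is impossible for $n<0$, proving the first assertion, and for $n\geq0$ it forces $m=12n$, proving the second. The one point that demands care is the orientation bookkeeping: one must make sure that right twists map to $+1$ under $\phi$ and that the chain relation reads $t_\delta=(t_at_b)^{+6}$ rather than $(t_at_b)^{-6}$, as this is precisely what throws the obstruction onto negative $n$; granting that, the whole argument is an immediate consequence of $\phi$ being a homomorphism into an abelian group.
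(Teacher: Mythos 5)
Your argument is correct and is essentially the paper's own proof: both pass to the abelianization $H_1(\Gamma_1^1)\cong\Z$, where every right nonseparating Dehn twist represents $1$ and $t_\delta=(t_at_b)^6$ represents $12$, so counting gives $12n=m$ and hence the nonnegativity obstruction for $n<0$. Your extra care about conjugacy of nonseparating twists and the sign in the chain relation is sound but only makes explicit what the paper leaves implicit.
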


\begin{proof}
The mapping class group $\Gamma_{1}^1$ is isomorphic to the braid group on three strands, and is generated by $t_a,t_b$ for any two nonseparating simple closed curves intersecting at one point. The first integral homology group $H_1(\Gamma_1^1)$ is isomorphic to $\Z$ (c.f~\cite{k02tjm} Theorem~5.1). Let us identify $H_1(\Gamma_1^1)$ with $\Z$ so that the Dehn twist about a nonseparating curve represents $1 \in \Z$.

Note that we have the equality $t_\delta=(t_at_b)^6$, so that $t_\delta$ represents $12$ in $H_1(\Gamma_1^1) \cong \Z$. It follows that if $t_\delta^n$ is written as a product of $h$ commutators and $m$ right nonseparating Dehn twists, then $12n=m$. The proposition follows from this.
\end{proof}

\section{Commutator lengths and surface bundles with maximal sections} \label{Maximal}

If $x$ and $y$ are elements of a group $G$, then we write $[x,y]=xyx^{-1}y^{-1}$ and call it a \emph{commutator}. The
purpose of this section is to write the powers of the Dehn twist about a boundary component on a surface
as a product of least possible number of commutators. More precisely, we will compute the commutator length and the stable
commutator length of such a Dehn twist. We then apply it to surface bundles in this section and to Lefschetz fibrations
in the next section.

The following lemma will be useful for us. The case $k=1$ was used in~\cite{KorkOzbag}.

\begin{lemma} \label{lem:comm}
Let $\Sigma$ be a compact connected oriented surface. Let $a,b,c,d$ be simple closed curves on
$S$ such that there exists a diffeomorphism $f$ mapping $(a,b)$ to $(d,c)$. Then
$t_a^kt_b^{-k} t_c^kt_d^{-k}$ is a commutator.
\end{lemma}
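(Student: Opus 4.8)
The plan is to exhibit $t_a^k t_b^{-k} t_c^k t_d^{-k}$ as a single commutator $[x,y]$ by using the hypothesized diffeomorphism $f$ in an essential way. The natural candidate is to take $y$ to be (the mapping class of) $f$ itself, and to build $x$ out of Dehn twists about $a$ and $b$. The key identity I would invoke is the conjugation formula for Dehn twists: for any diffeomorphism $f$ and simple closed curve $\gamma$, one has $f t_\gamma f^{-1} = t_{f(\gamma)}^{\pm 1}$, with the sign positive when $f$ is orientation-preserving. Since the mapping class group here consists of orientation-preserving classes, $f t_a^k f^{-1} = t_{f(a)}^k = t_d^k$ and $f t_b^k f^{-1} = t_{f(b)}^k = t_c^k$.

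With this in hand, set $x = t_a^k t_b^{-k}$ and $y = f$. Then
\[
[x,y] = x y x^{-1} y^{-1} = t_a^k t_b^{-k} \, f \, t_b^k t_a^{-k} \, f^{-1} = t_a^k t_b^{-k} \, (f t_b^k f^{-1}) (f t_a^{-k} f^{-1}) = t_a^k t_b^{-k} t_c^k t_d^{-k},
\]
which is exactly the asserted product. So the proof reduces to: (i) recording the conjugation formula for powers of Dehn twists under an orientation-preserving diffeomorphism, and (ii) checking that the computation above is valid with the functional (right-to-left) multiplication convention the paper has fixed — i.e. that $f t_b^k f^{-1}$ means ``apply $f^{-1}$, then $t_b^k$, then $f$,'' whose net effect on the isotopy class of the curve $b$ is $t_{f(b)}^k = t_c^k$. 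One should also note that $f$ being merely a diffeomorphism of the (possibly bounded) surface, not necessarily fixing the boundary pointwise, is harmless: the displayed equation is an identity in the mapping class group, and all we use is that $f$ conjugates $t_a, t_b$ to $t_d, t_c$ up to isotopy.

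I do not expect a genuine obstacle here; the only thing to be careful about is the bookkeeping of the multiplication convention, since an incautious application of $f t_\gamma f^{-1} = t_{f^{-1}(\gamma)}$ (the opposite convention) would land on $t_a^k t_b^{-k} t_{f^{-1}(c)}^k t_{f^{-1}(d)}^k$ rather than the desired word. With the paper's stated convention ($f_1 f_2$ = first $f_2$, then $f_1$), conjugation by $f$ sends $t_\gamma$ to $t_{f(\gamma)}$, so the choice $x = t_a^k t_b^{-k}$, $y=f$ is the correct one and the identity comes out on the nose. The $k=1$ case cited from~\cite{KorkOzbag} is precisely this argument with all exponents equal to one, so the generalization is immediate.
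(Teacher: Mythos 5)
Your proof is correct and is essentially identical to the paper's: both exhibit the word as the commutator $[t_a^k t_b^{-k}, f]$ using the conjugation identity $f t_\gamma f^{-1} = t_{f(\gamma)}$, so that $t_c^k t_d^{-k} = f\, t_b^k t_a^{-k}\, f^{-1}$. Your remarks on the multiplication convention are sound but add nothing beyond the paper's one-line computation.
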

\begin{proof}
$t_a^kt_b^{-k} t_c^kt_d^{-k}=t_a^kt_b^{-k} t_{f(b)}^kt_{f(a)}^{-k}= t_a^kt_b^{-k} f (t_{b}^kt_{a}^{-k}) f^{-1}=[t_a^kt_b^{-k},f]$.
\end{proof}

\subsection{Commutator length of the Dehn twist about a boundary component} \

\subsubsection{Lantern relation and its generalization}
Let $D'$ be a disk with boundary $\delta'$ from which the interior of three disjoint disks
$D_i'$ are removed. Let $a'_1,a'_2,a'_3$ be the resulting boundary components, i.e., $a'_i=\partial D'_i$. Consider the simple closed curves
$x'_1,x'_2,x'_3$ on $D'$ shown in Figure~\ref{lantern-57}(i). Then we have the lantern relation
\begin{eqnarray} \label{eqn:lantern}
t_{\delta'} \, t_{a'_1}t_{a'_2}t_{a'_3}=t_{x'_1}t_{x'_2}t_{x'_3}.
\end{eqnarray}
We note that by the Dehn twist about a boundary component we mean the Dehn twist about a simple closed
curve parallel to that boundary component.

Suppose now that $D$ is a disk with boundary $\delta$ from which the interior of four disjoint disks
$D_i$ are removed. Let $a_1,a_2,a_3,a_4$ be the resulting boundary components, i.e., $a_i=\partial D_i$. Consider the simple closed curves
$x_1,x_2,x_3,x_4,y_1,y_2$ on $D$ shown in Figure~\ref{lantern-57}(ii).

\begin{figure}[hbt]
\begin{center}
\includegraphics[width=13cm]{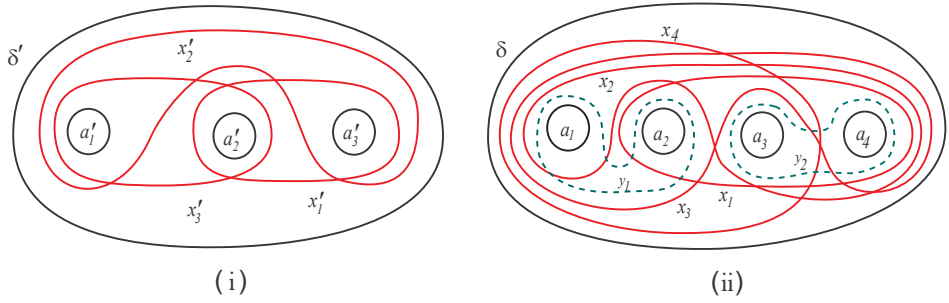}
\caption{The curves of the lantern relation and the generalized lantern relation.}
\label{lantern-57}
\end{center}
\end{figure}

\begin{proposition}
Referring to the simple closed curves on Figure~\ref{lantern-57}(ii),
the following generalization of the lantern relation holds in the mapping class group of the four-holed disk
$D$:
\begin{eqnarray} \label{eqn:alt-delta}
t_\delta^{2} \, t_{a_1}t_{a_2}t_{a_3}t_{a_4}=t_{x_1}t_{x_2}t_{x_3}t_{x_4}.
\end{eqnarray}
\end{proposition}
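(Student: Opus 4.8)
The plan is to derive \eqref{eqn:alt-delta} by applying the ordinary lantern relation \eqref{eqn:lantern} to two overlapping four‑holed subspheres of $D$. The auxiliary curves of Figure~\ref{lantern-57}(ii) are the boundaries of these subspheres: I take $y_1$ to be a simple closed curve in $D$ enclosing the two holes $D_3,D_4$ only, and $y_2$ one enclosing $D_1,D_2$ only. Removing from $D$ a regular neighborhood of an arc together with $D_3$ and $D_4$ produces a four‑holed sphere $P_1$ with boundary components $\delta,a_1,a_2,y_1$; symmetrically one gets a four‑holed sphere $P_2$ with boundary components $\delta,a_3,a_4,y_2$. The crucial observation is that, on the right‑hand side of the lantern relation carried out on $P_1$, the interior curve that encloses the pair $a_1,a_2$ is isotopic \emph{in $D$} to $y_2$, and on the right‑hand side of the lantern relation on $P_2$ the interior curve enclosing $a_3,a_4$ is isotopic in $D$ to $y_1$. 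Matching the remaining interior curves and the orientations to Figure~\ref{lantern-57}(ii), this yields
\begin{equation*}
t_\delta\, t_{a_1} t_{a_2} = t_{y_2}\, t_{x_1} t_{x_2}\, t_{y_1}^{-1}
\qquad\text{and}\qquad
t_\delta\, t_{a_3} t_{a_4} = t_{y_1}\, t_{x_3} t_{x_4}\, t_{y_2}^{-1}.
\end{equation*}

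Next I multiply the two identities. Since $\delta,a_1,\dots,a_4$ are all boundary components of $D$, the twists $t_\delta,t_{a_1},\dots,t_{a_4}$ are central in the mapping class group of $D$, so the left‑hand side may be regrouped freely and
\[
t_\delta^{2}\, t_{a_1}t_{a_2}t_{a_3}t_{a_4}=(t_\delta t_{a_1}t_{a_2})(t_\delta t_{a_3}t_{a_4})=t_{y_2}\, t_{x_1}t_{x_2}t_{x_3}t_{x_4}\, t_{y_2}^{-1},
\]
the factors $t_{y_1}^{-1}t_{y_1}$ cancelling in the middle. Finally, $y_2$ is disjoint from each of $\delta,a_1,a_2,a_3,a_4$, so $t_{y_2}$ commutes with the (central) left‑hand side; since that left‑hand side equals $t_{y_2}\,(t_{x_1}t_{x_2}t_{x_3}t_{x_4})\,t_{y_2}^{-1}$, conjugating by $t_{y_2}^{-1}$ removes the outer $t_{y_2}^{\pm1}$ and leaves exactly \eqref{eqn:alt-delta}.

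I expect the genuine work to be bookkeeping rather than mathematics: one must fix orientations of all the curves involved and the cyclic orderings in the two applications of \eqref{eqn:lantern} so that the four interior curves that appear are precisely the curves $x_1,x_2,x_3,x_4$ of Figure~\ref{lantern-57}(ii) and occur in that order. The one substantive point to verify carefully is that the ``pair'' curves coming out of the two lanterns really are $y_1$ and $y_2$ up to isotopy \emph{in $D$} (not merely inside $P_1$ and $P_2$); once that is checked, centrality of the boundary twists together with the evident disjointness relations in the figure finish the argument with no further input.
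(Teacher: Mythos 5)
Your proof is correct and follows essentially the same route as the paper's: the paper also applies the ordinary lantern relation to the two overlapping four-holed subspheres of $D$ cut out by the two curves enclosing the pairs of holes, multiplies the resulting identities, and cancels the twists about those auxiliary curves using centrality of the boundary twists. The only deviations are cosmetic — your labels $y_1,y_2$ are swapped relative to Figure~\ref{lantern-57}(ii), and the exact positions of the $t_{y_i}$-factors in the two lantern products must be read off from the figure — which is precisely the bookkeeping you already flagged.
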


\begin{proof}
Note that the simple closed curves $\delta, a_1,a_2$ and $y_2$ bound a sphere
with four holes. By the lantern relation we have
\begin{eqnarray} \label{eqn:lantern1}
t_\delta \, t_{a_1}t_{a_2} t_{y_2}= t_{y_1}t_{x_1}t_{x_2}.
\end{eqnarray}
Note also that $\delta, y_1,a_3$ and $a_4$ also bound a sphere
with four holes. Again by the lantern relation we have
\begin{eqnarray} \label{eqn:lantern2}
t_\delta \, t_{y_1}t_{a_3}t_{a_4}= t_{x_3}t_{x_4}t_{y_2}.
\end{eqnarray}

From the relations (\ref{eqn:lantern1}) and (\ref{eqn:lantern2}),
we get
\begin{eqnarray} \label{eqn:lantern3}
t_\delta^2 \, t_{a_1}t_{a_2}t_{a_3}t_{a_4}t_{y_1}t_{y_2}= t_{y_1}t_{x_1}t_{x_2}t_{x_3}t_{x_4}t_{y_2}.
\end{eqnarray}
By canceling $t_{y_1}$ and $t_{y_2}$ from the equation (\ref{eqn:lantern3}) we obtain the desired result,
the equality (\ref{eqn:alt-delta}).
\end{proof}

\bigskip

\begin{remark} [Generalized lantern relation] The above proof can be generalized in a straightforward fashion to obtain the relation:
\begin{eqnarray*}
t_\delta^{n-2} \,t_{a_1}t_{a_2} \cdots t_{a_n} = t_{x_1}t_{x_2}\cdots t_{x_n}
\end{eqnarray*}
on a disk with $n$ boundary components.\footnote{As Burak Ozbagci kindly pointed out to us, this generalized lantern relation is equivalent to a relation obtained by Plamenevskaya and Van Horn-Morris in \cite{PV}.}
\end{remark}

We are now ready to prove the following theorem:

\begin{theorem} \label{thm:ez-d^2n} Let $g\geq 2$, $n\geq 1$ and let $\Sigma$ be a compact connected oriented surface of genus $g$ with boundary. Let $\delta$ be one of the boundary components of $\Sigma$. Then the $n^{th}$ power $t_\delta^{n}$ of the Dehn twist about $\delta$ is a product of  $\lfloor (n+3)/2 \rfloor $  commutators, where $\lfloor (n+3)/2 \rfloor $ is the largest integer not greater than $(n+3)/2$.
\end{theorem}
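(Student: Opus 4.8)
The plan is to produce an explicit factorization of $t_\delta^n$ into $\lfloor (n+3)/2\rfloor$ commutators by combining two ingredients: the generalized lantern relation on a holed disk (which lets us write $t_\delta^{n-2}$ up to correction twists coming from the $a_i$) and Lemma~\ref{lem:comm}, which converts symmetric products of the form $t_a^k t_b^{-k} t_c^k t_d^{-k}$ into single commutators. The key idea is that the correction terms $t_{a_1}\cdots t_{a_n}$ and $t_{x_1}\cdots t_{x_n}$ appearing in the generalized lantern relation can be chosen so that the curves come in ``mirror pairs'' inside a genus $g \geq 2$ surface, so that pairs of twists $t_{a_i}^{}t_{x_j}^{-1}$ can be absorbed into commutators via the lemma. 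Since each application of the lemma consumes essentially two twists at the cost of one commutator, and the base lantern relation already exhibits $t_\delta^{n-2}$ with a bounded number of commutators' worth of slack, one lands at roughly $n/2$ commutators, with the precise count $\lfloor(n+3)/2\rfloor$ coming from a careful bookkeeping of small cases.

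Concretely, I would first embed a holed disk into $\Sigma$ so that the boundary curve $\delta$ of $\Sigma$ is isotopic to the outer boundary of the disk, and so that each inner boundary curve $a_i$ bounds (on the other side) a subsurface of $\Sigma$ of genus at least one; this is where $g \geq 2$ enters. Then the generalized lantern relation gives $t_\delta^{n-2} = t_{x_1}\cdots t_{x_n}\, t_{a_n}^{-1}\cdots t_{a_1}^{-1}$ in $\Gamma(\Sigma)$. Next I would group the right-hand side into blocks of four twists of the form $t_{x_i} t_{a_j}^{-1} t_{x_k} t_{a_\ell}^{-1}$ (after commuting disjoint twists past one another, which is legitimate since the $a_i$ are disjoint from the $x$'s except at controlled intersections — one should check the relevant curves are actually disjoint, or rearrange so that they are), and for each such block verify that there is a diffeomorphism of $\Sigma$ carrying the ordered pair of curves $(x_i, a_j)$ to $(a_\ell, x_k)$; such a diffeomorphism exists by the change-of-coordinates principle once one checks that both pairs consist of, say, a nonseparating curve and a separating curve in the same topological position, which is arranged by the embedding in the previous step. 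Lemma~\ref{lem:comm} then turns each block into one commutator. Finally I would separately handle the small residual cases $n=1,2$ (e.g. $t_\delta = $ a product of two commutators, recovering the known $\scl(t_\delta)\le 1/2$ type bound, and $t_\delta^2$) and the parity of $n$, to pin down the exact constant $\lfloor(n+3)/2\rfloor$ rather than just $O(n)$.

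The main obstacle I anticipate is the geometric matching step: ensuring that for each block the four curves involved genuinely admit a single diffeomorphism $f$ of $\Sigma$ with $f(a)=d$, $f(b)=c$, which forces me to be quite careful about how the lantern curves sit relative to the genus of $\Sigma$ and relative to each other (their topological types, their complements, and whether the needed twists commute so that the grouping into blocks is valid). A secondary obstacle is the combinatorial accounting at the ends — the generalized lantern relation as stated produces $t_\delta^{n-2}$, so one needs an extra input to jump from $n-2$ to $n$ (presumably by also using the ordinary lantern relation or by writing $t_\delta^2$ itself efficiently as commutators), and getting the floor function exactly right, including odd versus even $n$ and the very small cases, will require treating a handful of base cases by hand. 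Note also that this theorem only gives the \emph{upper} bound $\cl(t_\delta^n) \le \lfloor(n+3)/2\rfloor$; the matching lower bound, needed for the sharp statement in Theorem~\ref{thm:scl}, is presumably established separately (via the results of Endo--Kotschick and signature/Euler-characteristic estimates alluded to earlier), and I would not attempt it here.
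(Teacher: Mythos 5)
There is a genuine gap: your construction requires the ambient genus to grow with $n$, while the theorem fixes $g\geq 2$ and lets $n$ be arbitrary. You propose to realize $t_\delta^{n}$ via the generalized lantern relation on a disk with roughly $n$ holes, embedded in $\Sigma$ so that ``each inner boundary curve $a_i$ bounds (on the other side) a subsurface of $\Sigma$ of genus at least one.'' Capping each of the $\sim n$ holes by disjoint positive-genus subsurfaces forces $g\gtrsim n$; pairing holes by tubes still forces $g\gtrsim n/2$; and capping holes by disks kills the twists $t_{a_i}$ and degenerates the $x$-curves, destroying both the relation and the matching hypotheses of Lemma~\ref{lem:comm}. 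So for fixed $g\geq 2$ and large $n$ (which is exactly the regime needed, e.g.\ for the $\scl$ computation in a fixed mapping class group) the embedding you rely on does not exist, and with it goes the block-by-block application of Lemma~\ref{lem:comm}, since the diffeomorphisms carrying $(x_i,a_j)$ to $(a_\ell,x_k)$ are only plausible under that genus-rich embedding.

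The missing idea, and the one the paper uses, is to keep the configuration of curves \emph{fixed} (a four-holed disk, embedded in a genus-two subsurface with $a_1=a_4$) and to get the $n$-dependence by raising the single relation $t_\delta^{2}t_{a_1}t_{a_2}t_{a_3}t_{a_4}=t_{x_1}t_{x_2}t_{x_3}t_{x_4}$ to the $k$-th power: after commuting the disjoint twists and rearranging, one obtains
\begin{equation*}
t_\delta^{2k}=\Bigl(\prod_{i=1}^{k}\bigl(t_{x_1}t_{a_2}^{-1}t_{x_2}t_{a_1}^{-1}\bigr)^{t_{x_3}^{\,i-1}}\Bigr)\bigl(t_{x_3}^{k}t_{a_4}^{-k}t_{x_4}^{k}t_{a_3}^{-k}\bigr),
\end{equation*}
and here Lemma~\ref{lem:comm} is used in its full strength with exponent $k$, so that $t_{x_3}^{k}t_{a_4}^{-k}t_{x_4}^{k}t_{a_3}^{-k}$ is a \emph{single} commutator for every $k$, while each conjugated block contributes one commutator; this yields $k+1$ commutators for $n=2k$, and one extra lantern relation handles odd $n$ with $k+2$. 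Your scheme only ever applies the lemma with exponent $1$, which is why you are driven to a number of curves (hence genus) growing with $n$. To repair the argument you would need to import this power trick (or some other device that expresses high powers of $t_\delta$ inside a bounded-genus configuration); as written, your proof at best establishes the bound when $g$ is large compared to $n$, which is strictly weaker than the statement.
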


\begin{proof}
First, we rewrite (\ref{eqn:alt-delta}) as
\begin{eqnarray*}
t_\delta^{2} t_{a_1}t_{a_2}t_{a_3}t_{a_4}t_{x_4}^{-1}= t_{x_1}t_{x_2}t_{x_3}.
\end{eqnarray*}
The Dehn twists on the left--hand side of the above equality commute with each other. So for each positive integer $k$, we obtain
\begin{eqnarray*}
t_\delta^{2k} t_{a_1}^k t_{a_2}^k t_{a_3}^k t_{a_4}^k t_{x_4}^{-k}&=&(t_{x_1}t_{x_2}t_{x_3})^k\\
&=& (t_{x_1}t_{x_2}) (t_{x_1}t_{x_2})^{t_{x_3}}  (t_{x_1}t_{x_2})^{t_{x_3}^2} \cdots (t_{x_1}t_{x_2})^{t_{x_3}^{k-1}}
t_{x_3}^k\\
&=& \left( \prod_{i=1}^{k}  (t_{x_1}t_{x_2})^{t_{x_3}^{i-1}} \right)t_{x_3}^k,
\end{eqnarray*}
or
 \begin{eqnarray}\label{eqn:qqq}
t_\delta^{2k} t_{a_1}^k t_{a_2}^k t_{a_3}^k t_{a_4}^k
 &=&\left( \prod_{i=1}^{k}  (t_{x_1}t_{x_2})^{t_{x_3}^{i-1}} \right)t_{x_3}^k t_{x_4}^k.
\end{eqnarray}
Here, $t_a^f $ denotes the conjugation $ft_af^{-1}$. By multiplying both sides of~\eqref{eqn:qqq}
by $t_{a_1}^{-k} t_{a_2}^{-k} t_{a_3}^{-k} t_{a_4}^{-k}$, we get
 \begin{eqnarray}\label{eqn:delta2nn}
t_\delta^{2k}
 &=&\left( \prod_{i=1}^{k}  (t_{x_1}t_{a_2}^{-1} t_{x_2} t_{a_1}^{-1})^{t_{x_3}^{i-1}} \right)
 (t_{x_3}^k t_{a_4}^{-k} t_{x_4}^k t_{a_3}^{-k} ).
\end{eqnarray}

\begin{figure}[hbt]
 \begin{center}
      \includegraphics[width=11cm]{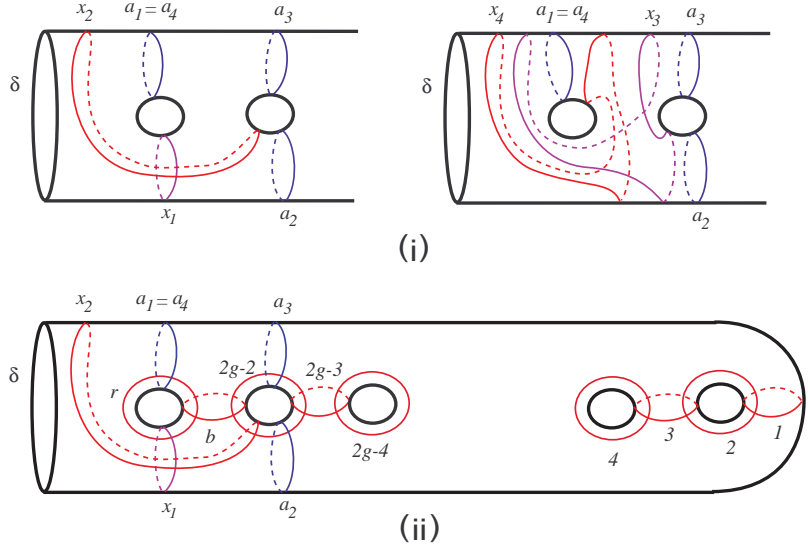}
      \caption{The curve labeled $i$ is $c_i$ for $i=1,2,\ldots, 2g-2$.}
      \label{lantern-59}
   \end{center}
 \end{figure}

Consider the four-holed disk $D$ in Figure~\ref{lantern-57}(ii) embedded in the surface $\Sigma$ of
genus $g\geq 2$ as in Figure~\ref{lantern-59}(i), so that $a_1=a_4$.

Suppose first that $n=2k$. It follows from Lemma~\ref{lem:comm} that $t_{x_3}^k t_{a_4}^{-k}  t_{x_4}^k  t_{a_3}^{-k}$
is a commutator and so is $t_{x_1} t_{a_2}^{-1} t_{x_2}  t_{a_1}^{-1}$. Since a conjugate of a commutator
is again a commutator, we have expressed $t_\delta^{n}$ as a product of $k+1$ commutators, proving the
theorem for even $n$.

Suppose now that $n=2k+1$. We rewrite the lantern relation~\eqref{eqn:lantern2} as
\begin{eqnarray}\label{eqn:lantern222}
t_{x_3}^{-1} t_\delta  =  t_{x_4} t_{a_3}^{-1}t_{a_4}^{-1}  t_{y_2}  t_{y_1}^{-1} .
\end{eqnarray}
From~\eqref{eqn:delta2nn} and~\eqref{eqn:lantern222}, we get
 \begin{eqnarray*}
t_{x_3}^{-1} t_\delta^{2k+1}
 &=&\left( \prod_{i=1}^{k}  (t_{x_1}t_{a_2}^{-1} t_{x_2} t_{a_1}^{-1})^{t_{x_3}^{i-1}} \right)
 (t_{x_3}^k  t_{a_4}^{-k-1} t_{x_4}^{k+1} t_{a_3}^{-k-1}) (t_{y_2}  t_{y_1}^{-1}).
\end{eqnarray*}
or
 \begin{eqnarray}\label{eqn:delta2k+1}
t_\delta^{2k+1}
 &=&\left( \prod_{i=1}^{k}  (t_{x_1}t_{a_2}^{-1} t_{x_2} t_{a_1}^{-1})^{t_{x_3}^{i}} \right)
 (t_{x_3}^{k+1}t_{a_4}^{-k-1}  t_{x_4}^{k+1} t_{a_3}^{-k-1} ) (t_{y_2}  t_{y_1}^{-1}).
\end{eqnarray}
It follows again from Lemma~\ref{lem:comm} that each one of the elements
$t_{x_1}t_{a_2}^{-1} t_{x_2} t_{a_1}^{-1}$ and $t_{x_3}^{k+1}t_{a_4}^{-k-1}  t_{x_4}^{k+1} t_{a_3}^{-k-1}$
is a commutator. Since $t_{y_2}  t_{y_1}^{-1}$ is also a commutator, it follows that
$t^{2k+1}_\delta$ is a product of $k+2$ commutators.

This proves the theorem.
\end{proof}

\begin{corollary} \label{cor:ez-d^2n}
Let $g\geq 2$, $h\geq 1$, and let $\Sigma$ be a compact connected oriented surface of genus $g$
with boundary. Let $\delta$ be one of the boundary components of $\Sigma$. Then for each $2-2h\leq k\leq 2h-2$,
$t_\delta^{k}$ is a product of $h$ commutators.
\end{corollary}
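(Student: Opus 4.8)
The plan is to read this off directly from Theorem~\ref{thm:ez-d^2n}, combined with two elementary observations that cost nothing: first, the identity element is a trivial commutator $[1,1]$, so a product of $j$ commutators is also a product of $j'$ commutators for any $j' \geq j$; second, the inverse of a commutator is a commutator, since $[x,y]^{-1} = [y,x]$, and hence the inverse of a product of $j$ commutators is again a product of $j$ commutators. With these in hand the corollary becomes a bookkeeping exercise.

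First I would dispose of $k=0$: since $h \geq 1$, $t_\delta^0 = 1$ is trivially a product of $h$ commutators. Next, for $1 \leq k \leq 2h-2$, Theorem~\ref{thm:ez-d^2n} writes $t_\delta^k$ as a product of $\lfloor (k+3)/2 \rfloor$ commutators in the mapping class group of $\Sigma$. The one point that must be checked is the arithmetic inequality $\lfloor (k+3)/2 \rfloor \leq h$ over the whole range: the left-hand side is nondecreasing in $k$, and at the top end $\lfloor (2h-2+3)/2 \rfloor = \lfloor (2h+1)/2 \rfloor = h$. Padding with the appropriate number of copies of $[1,1]$ then exhibits $t_\delta^k$ as a product of exactly $h$ commutators.

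Finally, for $2-2h \leq k \leq -1$ we have $1 \leq -k \leq 2h-2$, so by the previous step $t_\delta^{-k} = \prod_{i=1}^{h} [x_i,y_i]$ for suitable mapping classes $x_i, y_i$. Inverting and using $[x,y]^{-1} = [y,x]$ yields $t_\delta^{k} = (t_\delta^{-k})^{-1} = \prod_{i=h}^{1} [y_i,x_i]$, once more a product of $h$ commutators. Together with the cases $k=0$ and $1 \leq k \leq 2h-2$ this covers every $k$ with $2-2h \leq k \leq 2h-2$, which is exactly the claim.

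I do not expect any real obstacle here, since the statement is a routine consequence of Theorem~\ref{thm:ez-d^2n}; the only thing one must be even mildly careful about is the floor computation guaranteeing that $h$ commutators always suffice, and the (trivial) remark that one may always pad upward to land on exactly $h$.
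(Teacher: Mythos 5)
Your proposal is correct and matches the paper's (implicit) reasoning: the corollary is stated as an immediate consequence of Theorem~\ref{thm:ez-d^2n}, and the intended argument is exactly yours --- the bound $\lfloor(|k|+3)/2\rfloor\leq h$ for $|k|\leq 2h-2$, padding with trivial commutators, and the fact that $\cl(x^{-1})=\cl(x)$ to handle negative powers.
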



\subsection{Stable commutator lengths in mapping class groups of surfaces with boundary} \label{sclresults} \

Let $G$ be a group and let $[G,G]$ denote the commutator subgroup of $G$, the subgroup generated by the set of
all commutators $[x,y]=xyx^{-1}y^{-1}$ for $x,y\in G$. Let $x$ be an element in $[G,G]$, so that it can be
written as a product of commutators. The \emph{commutator length} $\cl (x)$ of $x$ in $G$ is defined as the
minimal number of commutators need to express $x$ as a product of commutators. The \emph{stable commutator length} of $x$ is the limit
\[ \displaystyle \scl (x)=\lim_{n\to\infty} \frac{\cl (x^n)}{n}.\]
This limit exists since the function $\cl$ is subadditive:
$\cl (x^{n+m})\leq \cl (x^{n}) + \cl (x^{m})$. Note also that $\cl (yxy^{-1})= \cl (x)$, $\cl (x^{-1})= \cl (x)$ and $\scl (x^k)=|k|\, \scl (x)$ for any $x\in [G,G]$ and $y\in G$. For more on the functions $\cl$
 and $\scl$, we refer to~\cite{Calegari}.

\begin{theorem} \label{thm:scl}
Let $g\geq 2$ and let $\Sigma$ be a compact connected oriented surface of genus $g$ with at least
one boundary components. Let $\delta$  be one of the boundary components of $\Sigma$. Then for any nonzero integer $n$
the commutator length of $t_\delta^n$ is
\[ \cl (t_\delta^n)=\lfloor (|n|+3)/2 \rfloor.
\]
In particular, we have $\scl (t_\delta)=1/2$.
\end{theorem}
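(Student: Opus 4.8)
The upper bound $\cl(t_\delta^n)\le\lfloor(|n|+3)/2\rfloor$ is already in hand from Theorem~\ref{thm:ez-d^2n} for $n\ge 1$ (and for $n\le -1$ by applying that theorem to $t_\delta^{-1}=t_{\bar\delta}$, or equivalently noting $\cl(x)=\cl(x^{-1})$), so the entire content of Theorem~\ref{thm:scl} is the matching \emph{lower} bound
$$\cl(t_\delta^n)\ge\lfloor(|n|+3)/2\rfloor.$$
By the symmetry $\cl(t_\delta^n)=\cl(t_\delta^{-n})$ I may assume $n\ge 1$. The natural tool for lower bounds on commutator length in mapping class groups is a \emph{quasimorphism}: if $\phi\colon\Gamma_g^s\to\R$ is a homogeneous quasimorphism with defect $D(\phi)$, then for any product of $c$ commutators one has $|\phi(x)|\le 2c\,D(\phi)$, hence $\cl(x)\ge |\phi(x)|/(2D(\phi))$. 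So the plan is: (i) produce a homogeneous quasimorphism $\phi$ on the mapping class group $\Gamma$ of $\Sigma$ (genus $g\ge 2$, at least one boundary component) with $\phi(t_\delta)\neq 0$ and with defect small enough that the resulting bound is $\ge n/2$ asymptotically; then (ii) refine the estimate to get the exact floor function for every $n$.

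For step (i) the relevant quasimorphisms are the ones coming from the Gambaudo--Ghys / Endo--Kotschick circle of ideas — realized concretely here by the signature cocycle, or equivalently by the $\overline{\mathrm{SW}}$-type homomorphisms whose notation the preamble sets up ($\SWP$, $\SWo$). The key computation is $\phi(t_\delta)$: a Dehn twist about a boundary-parallel curve caps off to a surface of genus $g+1$ and its powers detect the Meyer signature cocycle, yielding (after suitable normalization) $\phi(t_\delta^n)=n$ up to a universal constant, while $D(\phi)$ is bounded by a universal constant independent of $n$. Plugging in gives $\cl(t_\delta^n)\ge n/(2D(\phi))$; choosing the normalization so that this reads $\cl(t_\delta^n)\ge (n+1)/2$ or better recovers the floor bound up to an additive constant.

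The \textbf{main obstacle} — and where real work is needed — is getting the \emph{sharp} constant, i.e.\ going from an asymptotic bound $\cl(t_\delta^n)\gtrsim n/2$ to the exact value $\lfloor(|n|+3)/2\rfloor$ for \emph{every} $n\ge 1$, including small $n$. A bare quasimorphism argument typically loses an additive constant and cannot see the parity in the floor function. To close this gap I would argue directly at the level of the surface: suppose $t_\delta^n=\prod_{i=1}^c[x_i,y_i]$ in $\Gamma$; cap off $\delta$ to embed $\Sigma\hookrightarrow\Sigma_{g'}$ for suitable $g'$, producing a relation realizing a genus-$g$ Lefschetz-type fibration over $\Sigma_c^1$ with a section of self-intersection $-n$, then combine the adjunction bound $[S]^2\le 2c-2$ of Proposition~\ref{adjunctionbounds} — which gives $-n\le 2c-2$, i.e.\ $c\ge(n+2)/2$ — with a parity/minimality refinement (using relative minimality and the Endo--Kotschick obstruction that no nonzero power of a separating twist is a single commutator) to upgrade $(n+2)/2$ to $\lfloor(n+3)/2\rfloor$ when $n$ is odd. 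The stable statement then follows immediately:
$$\scl(t_\delta)=\lim_{n\to\infty}\frac{\cl(t_\delta^n)}{n}=\lim_{n\to\infty}\frac{\lfloor(n+3)/2\rfloor}{n}=\frac12.$$
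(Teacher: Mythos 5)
Your step (ii) is, in essence, the paper's own proof: write $t_\delta^n=\prod_{i=1}^c[x_i,y_i]$, cap off the boundary, obtain a $\Sigma_g$--bundle over a \emph{closed} genus--$c$ surface with a section of self-intersection $-n$, and apply Proposition~\ref{adjunctionbounds}. Two corrections, though. First, as written your inequality goes the wrong way: for a section with $[S]^2=-n$ and $n\ge 1$, the one-sided bound $[S]^2\le 2c-2$ only gives the vacuous $c\ge(2-n)/2$, not $c\ge(n+2)/2$. What you must invoke is the two-sided bound $|[S]^2|\le 2c-2$, which Proposition~\ref{adjunctionbounds} supplies precisely because the capped-off factorization has no Dehn twist factors and hence yields an honest surface bundle (equivalently, use $\cl(x)=\cl(x^{-1})$ and apply the one-sided bound to a commutator expression of $t_\delta^{-n}$, whose associated section has square $+n$). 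Second, the ``main obstacle'' you identify is illusory: $c$ is an integer, so $c\ge(n+2)/2$ already forces $c\ge\lceil(n+2)/2\rceil=\lfloor(n+3)/2\rfloor$ for both parities of $n$; no relative-minimality or Endo--Kotschick refinement is needed beyond what is already built into the proof of Proposition~\ref{adjunctionbounds}, and the entire quasimorphism step (i) can be discarded --- the paper uses quasimorphisms only as an alternative route to the \emph{upper} bound $\scl(t_\delta)\le 1/2$ (Remark~\ref{sclalternative}). With these repairs your argument coincides with the paper's: upper bound from Theorem~\ref{thm:ez-d^2n}, lower bound $c\ge\lfloor(|n|+3)/2\rfloor$ from the adjunction bound plus integrality, and $\scl(t_\delta)=1/2$ by taking the limit.
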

\begin{proof} Since, $\cl (t_\delta^{-n})=\cl (t_\delta^{n})$ we may assume that $n$ is positive.
By Theorem~\ref{thm:ez-d^2n},  $\cl (t_\delta^{n})\leq \lfloor (n+3)/2 \rfloor $.

Suppose, on the other hand, that $c(t_\delta^{n})=h$. Writing $t_\delta^{n}$ as
a product of $h$ commutators gives us a genus--$g$ surface bundle over
a surface of genus $h$ with a section of self-intersection $-n$. By Proposition~\ref{adjunctionbounds},
we get $n\leq 2h-2$. Hence, we have $c(t_\delta^{n}) \geq (n+2)/2$.
The theorem follows from this.
\end{proof}

Theorem~\ref{thm:scl} computes the first precise nonzero value of the stable commutator
length of an element in the mapping class group of a surface. In earlier works bounds
on stable commutator lengths of elements of mapping class groups were established without
such an explicit calculation \cite{EK, Kork}.

\begin{remark} (An alternative proof of ${\rm scl}(t_\delta) \leq \frac{1}{2}$.) \label{sclalternative}
The inequality ${\rm scl}(t_\delta) \leq \frac{1}{2}$ may alternatively be obtained by using quasimorphisms: Given a group $G$, a function $\phi:G\rightarrow \mathbb{R}$ is called a \emph{quasimorphism} if there exists a least constant
$D_\phi\geq 0$, called the \emph{defect} of $\phi$,
such that $|\phi(xy)-\phi(x)-\phi(y)|\leq D_\phi$ for all $x,y\in G$.
A quasimorphism $\phi$ is called \emph{homogeneous} if it satisfies the additional property that
$\phi(x^{n})=n\phi({x})$
for all $x\in G$ and integer $n$. For more on quasimorphisms, we refer to~\cite{Calegari}.

We recall the basic properties of homogeneous quasimorphisms we use. If $\phi$ is a homogeneous quasimorphism,
then $\phi$ is constant on conjugacy classes. Therefore, a homogeneous quasimorphism on the mapping class group
 takes the same value on Dehn twists about nonseparating simple closed curves. For any two elements $x,y$ in $G$, if $xy=yx$, then
\begin{eqnarray*}
 |\phi(xy)-\phi(x)-\phi(y)|
   &  =  & \lim_{n\rightarrow\infty}|\phi(xy)-\phi(x)-\phi(y)| \\
   &  =  & \lim_{n\rightarrow \infty}\frac{1}{n}|\phi((xy)^n)-\phi(x^n)-\phi(y^n)|\\
   &  =  & \lim_{n\rightarrow \infty}\frac{1}{n}|\phi(x^ny^n)-\phi(x^n)-\phi(y^n)| \\
   &\leq & \lim_{n\rightarrow \infty}\frac{1}{n}D_\phi\\
   &  =  &0.
 \end{eqnarray*}
Hence $\phi(xy)=\phi(x)+\phi(y)$.

Let $Q$ be the set of homogeneous quasimorphisms on $G$. Bavard's main theorem in~\cite{Ba} states that for any
$x\in [G,G]$,
 \begin{eqnarray} \label{eqn:Bavard}
  {\rm scl}(x)=\displaystyle\sup_{\phi\in Q}\frac{|\phi(x)|}{2\, D_\phi}.
\end{eqnarray}

Now, the proof proceeds as follows: Let $\phi$ be a homogeneous quasimorphism on the mapping class group of
$\Sigma$, which is a surface of genus $g\geq 2$. Let $\delta$ be a boundary component.
We may choose six nonseparating curves $a_1,a_2,a_3,x_1,x_2,x_3$ on $\Sigma$ such that we have the
lantern relation $t_\delta t_{a_1} t_{a_2} t_{a_3}=t_{x_1} t_{x_2}t_{x_3}$. We then have
  \begin{eqnarray*}
   \phi(t_{x_1} t_{x_2})&=&\phi(t_\delta t_{a_1} t_{a_2} t_{a_3} t_{x_3}^{-1})\\
    &=&\phi(t_\delta)+\phi(t_{a_1})+\phi(t_{a_2})+\phi(t_{a_3})-\phi(t_{x_3})\\
    &=&\phi(t_\delta)+2\phi(t_{a_1}).
  \end{eqnarray*}
 From the definition of quasimorphism and the properties of $\phi$
  \begin{eqnarray*}
   D_\phi&\geq&|\phi(t_{x_1} t_{x_2})-\phi(t_{x_1})-\phi(t_{x_2})|\\
    &=&|\phi(t_\delta)+2\phi(t_{a_1})-\phi(t_{x_1})-\phi(t_{x_2})|\\
    &=&|\phi(t_\delta)|.
  \end{eqnarray*}

Thus, by Bavard's result~(\ref{eqn:Bavard}), we have ${\rm scl}(t_\delta)\leq \frac{1}{2}$.
\end{remark}

Lastly, we look at the relevant \emph{stable} mapping class group. Let $\Sigma_g^{k}$ denote the
surface of genus $g$ with $k\geq 2$ boundary components.  Let us distinguish a boundary component $\partial$. By attaching a torus $T$ with two boundary components to $\Sigma_g^{k}$ along $\partial$, we obtain a surface $\Sigma_{g+1}^k$ of genus $g+1$ with a distinguished boundary component, $\partial$, the common boundary component of $T$ and $\Sigma_{g+1}^k$, and an injection $\Gamma_g^k\to \Gamma_{g+1}^k$. By taking the direct limit
we get a group
\[ \Gamma_\infty^{k-1} = \lim _{g\to\infty} \Gamma_g^k.\]
The group $\Gamma_\infty^{k-1}$ is the group of isotopy classes of compactly supported diffeomorphisms
of a one-ended countable infinite genus surface with $k-1$ boundary components.

If $\delta$ is a boundary component of $\Sigma_{g}^k$ other than $\partial$, then the Dehn twist $t_\delta$ may be seen as an element of $\Gamma_\infty^{k-1}$. Theorem~\ref{thm:scl} implies that the stable commutator length of $t_\delta$ in $\Gamma_\infty^{k-1}$ is $\frac{1}{2}$. The same conclusion holds if one stabilizes the surface along possible different boundary components but not along $\delta$.
(In this case the stable mapping class group \emph{depends} on the stabilization.)
In particular, $\scl (t_\delta)$ is stable. This may be compared with Harer's homology stability \cite{Harer}, and contrasts a result of Kotschick~\cite{kot} who showed that the function $\scl$ vanishes on $\Gamma_\infty=\Gamma_\infty^0$.

\subsection{Surface bundles with maximal sections} \

We note that in any group $G$, if $a_i$ commutes with $b_j$ then $[a_1,a_2][b_1,b_2]= [a_1b_1,a_2b_2]$. This fact will be used repeatedly below.

\begin{theorem} \label{maintheorem1}
Let $h\geq 1$. For every $g\geq 2$ and $2-2h\leq k \leq 2h-2$, there exists a $\Sigma_g$--bundle over $\Sigma_h$, admitting a section of self-intersection $k$, and in particular, a maximal section. Moreover, provided $g\geq 8h-8$, there exists a $\Sigma_g$--bundle over $\Sigma_h$, admitting $4h-3$ disjoint sections $S_i$ with $[S_i]^2= i$, for $2-2h\leq i \leq 2h-2$.
\end{theorem}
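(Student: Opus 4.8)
The plan is to deduce both statements from Corollary~\ref{cor:ez-d^2n}, using the dictionary recalled in Section~2 between factorizations lifted to $\Gamma_g^1$ (resp. $\Gamma_g^n$) and sections of the resulting bundle; the multisection part is then organized around a subsurface decomposition of the fiber.

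For the first assertion, apply Corollary~\ref{cor:ez-d^2n} to the surface $\Sigma_g^1$ with exponent $-k$ (which lies in $[2-2h,2h-2]$ exactly when $k$ does), obtaining $t_\delta^{-k}=\prod_{j=1}^h[\alpha_j,\beta_j]$ in $\Gamma_g^1$. Since the right-hand side is a product of commutators with no Dehn twist factors, pushing this relation forward along $\Gamma_g^1\to\Gamma_{g,1}$ kills $t_\delta$ and produces a factorization $1=\prod_{j=1}^h[\bar\alpha_j,\bar\beta_j]$ of the identity in $\Gamma_{g,1}$, i.e. the monodromy of a $\Sigma_g$-bundle over $\Sigma_h$ with a section. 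Lifting back to $\Gamma_g^1$ recovers the exponent $-k$ on $t_\delta$, and since that exponent is the negative of the section's self-intersection, the section has square $k$; choosing $k=2h-2$ gives a maximal section.

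For the second assertion, set $n=4h-3$ and index the desired sections by their target squares $i\in\{2-2h,\dots,2h-2\}$. The goal is a single factorization $\prod_i t_{\delta_i}^{-i}=\prod_{j=1}^h[A_j,B_j]$ in $\Gamma_g^n$, where $\delta_{2-2h},\dots,\delta_{2h-2}$ are the $n$ boundary curves. To build it, choose pairwise disjoint subsurfaces of $\Sigma_g^n$: a collar of $\delta_0$ that no monodromy will touch (this yields the square-zero section), and for each nonzero $i$ a subsurface $R_i\cong\Sigma_2^2$ one of whose two boundary circles is $\delta_i$. These fit inside $\Sigma_g^n$ because the $4h-4$ nonzero indices consume genus $2(4h-4)=8h-8\leq g$, the complement of the $R_i$ being a connected surface of genus $g-8h+8$. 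Inside each $R_i$ with $i\neq 0$, Corollary~\ref{cor:ez-d^2n} applied to the genus-$2$ surface $R_i$ with exponent $-i$ gives $t_{\delta_i}^{-i}=\prod_{j=1}^h[\alpha_j^{(i)},\beta_j^{(i)}]$ with all the curves involved supported in $R_i$. Multiplying the $4h-4$ relations and using that classes with disjoint supports commute, together with the identity $[a_1,a_2][b_1,b_2]=[a_1b_1,a_2b_2]$ (valid when every $a$ commutes with every $b$), one regroups these commutators index by index in $j$ into $h$ commutators $[A_j,B_j]$ with $A_j=\prod_{i\neq 0}\alpha_j^{(i)}$ and $B_j=\prod_{i\neq 0}\beta_j^{(i)}$. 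Pushing forward to $\Gamma_{g,n}$ kills every $t_{\delta_i}$ and gives a $\Sigma_g$-bundle over $\Sigma_h$ with $n$ disjoint sections, one per marked point; lifting back to $\Gamma_g^n$ reads off the exponent $-i$ on $\delta_i$, so $[S_i]^2=i$.

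The remainder is routine, and the one step that needs genuine care --- and which I expect to be the main obstacle to a clean write-up --- is verifying that the curves constructed in the proofs of Theorem~\ref{thm:ez-d^2n} and Corollary~\ref{cor:ez-d^2n} can all be kept inside a genus-$2$ piece $R_i$ and off its auxiliary boundary circle, so that the $R_i$ are honestly disjoint and the index-by-index regrouping is legitimate; one should also check that the $n$ marked points remain distinct, so the resulting sections are genuinely disjoint. This is also where the bound $g\geq 8h-8$ enters the construction: no nonzero power of a boundary twist is a product of commutators on a surface of genus at most $1$ (for genus $1$ by Proposition~\ref{propg=1}; for genus $0$ since capping off extra boundary reduces to the abelian mapping class group of an annulus), so each of the $4h-4$ nonzero target squares requires a genus-$2$ summand, whereas the single square-zero section costs no genus.
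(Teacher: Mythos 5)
Your proposal is correct and follows essentially the same route as the paper: the first assertion comes from Corollary~\ref{cor:ez-d^2n} in $\Gamma_g^1$ read as a monodromy factorization with a section of square $k$, and the second from writing each $t_{\delta_i}^{\pm i}$ as $h$ commutators supported on pairwise disjoint genus-two subsurfaces (the paper's pieces $Z_i$ glued to a central surface $Z$, accounting for $g\geq 8h-8$) and regrouping them into $h$ commutators via the identity $[a_1,a_2][b_1,b_2]=[a_1b_1,a_2b_2]$ for commuting supports. The point you flag as needing care is exactly what the paper handles by embedding the lantern configuration in a genus-two piece, and is noted in the remark following the theorem that the relations of Corollary~\ref{cor:ez-d^2n} hold on surfaces with more than one boundary component.
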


\begin{proof}
Let $\Sigma$ be a compact connected oriented surface of genus $g$ with one boundary component $\delta$. By Theorem~\ref{thm:ez-d^2n},
we may express $t_\delta^{2-2h}$ as a product of $h$ commutators in $\Gamma_{g}^1$. Capping off the boundary component $\delta$
of $\Sigma$ gives us a relation in $\Gamma_{g}$ which is the monodromy representation of a genus--$g$ surface bundle
over a genus--$h$ surface with a section, and the self-intersection number of this section is $2h-2$. This proves the first assertion of the theorem.

For the second part of the theorem, let $Z$ be a compact connected oriented surface of genus $l \geq 0$ with
$4h-3$ boundary components, $d_i$,  $2-2h\leq i \leq 2h-2$. For each $2-2h\leq i \leq 2h-2$ with $i\neq 0$, let
$Z_i$ be a compact connected orientable surface of genus two with two boundary components $d'_i$ and $\delta_i$.
Glue each $Z_i$ to $Z$ by identifying $d_i$ and $d'_i$ to obtain a connected oriented surface $\Sigma$ of genus
$g=8h-8+ l $ with $4h-3$ boundary components $\delta_i$, where $\delta_0=d_0$.

By Corollary~\ref{cor:ez-d^2n}, there are diffeomorphisms $f_{i1},g_{i1},f_{i2},g_{i2},\ldots,f_{ih},g_{ih}$
of $\Sigma$ supported on the subsurface  $Z_i$ such that
\[t_{\delta_i}^i=[f_{i1},g_{i1}][f_{i2},g_{i2}]\cdots [f_{ih},g_{ih}].  \]
Note that if $i\neq j$, then $f_{is}$ and $g_{is}$ commute with $f_{jt}$ and $g_{jt}$. It now follows  that
\[ \displaystyle \prod_{i=2-2h}^{2h-2} t_{\delta_i}^i
\]
is a product of $h$ commutators. Hence, we obtained a relation yielding a monodromy representation for a surface bundle with the desired properties.
\end{proof}

\begin{remark}
The second construction in our proof of Theorem~\ref{maintheorem1} can be derived from the first one geometrically. To do so, we first take separate $\Sigma_{g_i}$--bundle $f_i$ over $\Sigma_h$ with a section $S_i$ of self-intersection $[S_i]^2=i$ for each integer $i$ in the interval $[2-2h, 2h-2]$, for any chosen collection of $g_i \geq 2$ satisfying $\sum_i g_i = g \geq 8h -8$. Then we observe that the relations handed us by Corollary~\ref{cor:ez-d^2n} hold on surfaces with more than one boundary component as well. Thus we can get self-intersection zero sections of each $f_i$ disjoint from $S_i$. Hence, we can \textit{section sum} the $f_i$ along these self-intersection zero sections to obtain a $\Sigma_g$--bundle over $\Sigma_h$, provided $g \geq 8h-8$.
\end{remark}

We now look at the remaining cases for the fiber and the base genera not covered by Theorem~\ref{maintheorem1}.

If $g=0$, $h \geq 0$, then the total space of the bundle is an $S^2$--bundle over $\Sigma_h$, which is classified by the Euler number $k$ and the base genus $h$. Each bundle corresponding to Euler number $k$ admits a section of self-intersection $k$. When $g=1$, we swing to another extreme situation: By Proposition \ref{propg=1}, a section of a $T^2$--bundle over any $\Sigma_h$ can only attain zero self-intersection number.

When the base genus $h=0$, there is a unique $\Sigma_g$--bundle over $S^2$ for $g > 1$, namely the trivial bundle $\Sigma_g \x S^2$ given by the projection onto the second component. There are many $T^2$--bundles over $S^2$, however only one of them admits a section; the trivial bundle on $T^2 \x S^2$. (See for instance \cite{BK} (Lemma 10).) So the self-intersection number of a section of a $\Sigma_g$--bundle over $S^2$ for $g \geq 1$ is always zero.

These observations in the remaining cases can be summarized as:

\begin{proposition} \label{lowgeneracases} For any $k \in \Z$, and $h \geq 0$, there exists an $S^2$--bundle over $\Sigma_h$ admitting a section $S$ with $[S]^2 =k$. For any $g \geq 1$, the only $\Sigma_g$--bundle over $S^2$ admitting a section $S$ is the trivial one, for which $[S]^2$ is necessarily zero. For any $h \geq 0$, any section $S$ of a $T^2$--bundle over $\Sigma_h$ has $[S]^2=0$.
\end{proposition}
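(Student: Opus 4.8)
The plan is to treat the three assertions in turn, as they rest on quite different ingredients.

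\textbf{$S^2$--bundles over $\Sigma_h$.} I would start from the trivial bundle $\Sigma_h \x S^2 \to \Sigma_h$ and its section $\Sigma_h \x \{p\}$, which has self-intersection $0$, and then change the self-intersection by $\pm 1$ via an \emph{elementary transformation}: blow up the total space at a point lying on the section and on a fixed fiber $F \cong S^2$; the proper transform $\widetilde F = \pi^*F - E$ is a $(-1)$--sphere contained in the fibers, and blowing it back down yields an $S^2$--bundle over $\Sigma_h$ again, in which (a short blow-up computation shows) the proper transform of the section has self-intersection one less than before. Performing the same operation at a point of a fiber lying \emph{off} the section raises the self-intersection by one. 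Iterating these two moves starting from $\Sigma_h \x S^2$ produces, for every $k \in \Z$, an $S^2$--bundle over $\Sigma_h$ with a section of self-intersection $k$. (Alternatively one may simply invoke the classification of ruled surfaces: $\mathbb P(\mathcal O_{\Sigma_h} \oplus \mathcal O_{\Sigma_h}(D))$ with $\deg D = k$ carries a holomorphic section of self-intersection $k$.)

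\textbf{$\Sigma_g$--bundles over $S^2$.} For $g \geq 2$ I would use that the identity component of $\mathrm{Diff}^+(\Sigma_g)$ is contractible (Earle--Eells), so $\mathrm{Diff}^+(\Sigma_g)$ is homotopy equivalent to the discrete group $\Gamma_g$; hence $\Sigma_g$--bundles over $S^2$ are classified by $\pi_2 B\mathrm{Diff}^+(\Sigma_g) \cong \pi_1 \mathrm{Diff}^+(\Sigma_g) = 0$, so $\Sigma_g \x S^2$ is the only one. For $g = 1$ one has $\mathrm{Diff}^+(T^2) \simeq T^2 \rtimes SL(2,\Z)$, so the bundles over $S^2$ form a $\Z^2$--family; here I would quote \cite{BK} (Lemma~10), which says that among these only the trivial bundle $T^2 \x S^2$ admits a section. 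In either case a section of $\Sigma_g \x S^2 \to S^2$ is the graph of a map $S^2 \to \Sigma_g$, which is null--homotopic since $\pi_2(\Sigma_g) = 0$ for $g \geq 1$; thus the section is homotopic to a constant section $\{pt\} \x S^2$ and its self-intersection is $0$.

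\textbf{$T^2$--bundles over $\Sigma_h$.} The case $h = 0$ is part of the previous paragraph. For $h \geq 1$, a section $S$ with $[S]^2 = k$ of a $T^2$--bundle over $\Sigma_h$ gives, by lifting its monodromy factorization to $\Gamma_1^1$, a relation $t_\delta^{-k} = \prod_{j=1}^h [\alpha_j', \beta_j']$ with \emph{no} Dehn-twist factors, since a bundle has empty critical locus. I would then apply Proposition~\ref{propg=1} with $n = -k$ and $m = 0$: its first bullet rules out $-k < 0$, and its second forces $0 = m = 12(-k)$, whence $k = 0$.

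None of the three steps is genuinely hard. The one I expect to require the most care is the $g = 1$ half of the second assertion, where one must correctly combine the homotopy type of $\mathrm{Diff}^+(T^2)$ with the input of \cite{BK} isolating $T^2 \x S^2$ as the only $T^2$--bundle over $S^2$ admitting a section; if one wants the first assertion to be self-contained, the other place needing attention is verifying that the elementary transformation shifts the self-intersection of the section by exactly $\pm 1$ while preserving the property of being an $S^2$--bundle, which is the short blow-up/blow-down computation indicated above.
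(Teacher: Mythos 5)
Your proposal is correct and follows essentially the same route as the paper, which likewise handles the $S^2$--bundle case by the classification of sphere bundles/ruled surfaces by Euler number, the $h=0$ case by uniqueness of the bundle for $g\geq 2$ together with the citation of \cite{BK} (Lemma 10) for $g=1$, and the $T^2$--bundle case by feeding the lifted relation $t_\delta^{-k}=\prod_j[\alpha_j',\beta_j']$ into Proposition~\ref{propg=1}. The only difference is cosmetic: you make the $S^2$--bundle existence explicit via elementary transformations and spell out the null-homotopy argument for why sections of $\Sigma_g\times S^2$ have square zero, details the paper leaves to the cited classifications.
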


\vspace{0.3cm}
\section{\mbox{Lefschetz fibrations with arbitrarily large number of critical points}}\label{lfresults}

In this section, we focus on the following problem: \textit{For fixed non-negative integers $g$ and $h$,
is there an upper bound on the number of critical points of a relatively minimal genus--$g$ Lefschetz fibration over a genus--$h$ surface, admitting a maximal section?}

One can easily inflate the number of critical points of Lefschetz fibrations with fixed fiber and base genera by introducing reducible fibers or by performing fiber sums with the same genus fibrations over the $2$-sphere. Our assumptions on relative minimality of the fibrations and on the existence of maximal sections (see Proposition \ref{fibersumindecomposablity}) are therefore essential. On the other hand, it was shown by Ivan Smith~\cite{Smith} and independently by Andras Stipsicz~\cite{Stipsicz} that the self-intersection of a section of a non-trivial Lefschetz fibration over the $2$-sphere can be at most $-1$, which is achieved by many Lefschetz fibrations, and most importantly by those that are obtained from Lefschetz pencils. So the case $h=0$ of our question is the one in \cite{Au}.

The problem is vacuous for $g=0$ because of our natural assumption on the minimal relativity. For $g=1$, and for any fixed $h \geq 0$, Proposition \ref{propg=1} implies that the number of critical points $m$ of a Lefschetz fibration is determined by the self-intersection number $n$ of a section by the equaiton $m= - 12 n$. In particular, every section should have the same self-intersection number. So the number of critical points is bounded in this case. As shown by Ivan Smith \cite{Smith2} (also see~\cite{Sato}), the number of critical points is also bounded when $h=0$ and $g=2$. Our main theorem in this section shows that this fails to be true when the base genus $h$ is positive:

We will need the following well-known lemma in the proof of our main theorem of this section:

\begin{lemma} \label{lem:chain}
Let $c_1,c_2,\ldots, c_{2l+1}$ be a sequence of simple closed curves on an oriented surface
such that $c_i$ and  $c_j$ are disjoint if $|i-j|\geq 2$ and that $c_i$ and $c_{i+1}$ intersect at one point.
A regular neighborhood of $c_1\cup c_2\cup \cdots \cup  c_{2l+1}$ is a subsurface of genus $l$ with
two boundary components, say $d_1$ and $d_2$. We then have the chain relation
\[
\left( t_{c_1}t_{c_2}\cdots t_{c_{2l+1}} \right)^{2l+2}=t_{d_1}t_{d_2}.
\]
\end{lemma}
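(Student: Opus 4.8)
The plan is to deduce the chain relation from the classical ``fundamental'' relation in the braid group via the Birman--Hilden correspondence. First I would set up a model: a regular neighborhood $N$ of $c_1\cup c_2\cup\cdots\cup c_{2l+1}$ is a genus-$l$ surface with two boundary components $d_1,d_2$, so $\chi(N)=-2l$. Since $-2l=2\,\chi(D^2)-(2l+2)$, the surface $N$ is realized as the double cover of a disk $D$ branched over $2l+2$ interior points $p_1,\dots,p_{2l+2}$ lined up in order, the deck transformation being the hyperelliptic involution $\iota$; one checks that $\partial D$ lifts to the two circles $d_1\sqcup d_2$ because the monodromy of the cover around $\partial D$ is the product of the $2l+2$ branch transpositions, hence trivial. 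In this picture each chain curve $c_i$ is the unique $\iota$-invariant lift of an arc joining $p_i$ to $p_{i+1}$.

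Next I would invoke the Birman--Hilden theorem: restriction to $\iota$-equivariant mapping classes yields an isomorphism from the symmetric mapping class group of $N$ (rel $\partial N$) onto the braid group $B_{2l+2}=\Gamma(D,\{p_i\})$ on $2l+2$ strands, under which the standard half-twist generator $\sigma_i$ (swapping $p_i$ and $p_{i+1}$) corresponds to the Dehn twist $t_{c_i}$, and the Dehn twist $t_{\partial D}$ about a curve parallel to $\partial D$ corresponds to the product $t_{d_1}t_{d_2}$. The proof is then completed by the well-known identity $(\sigma_1\sigma_2\cdots\sigma_{2l+1})^{2l+2}=\Delta^2=t_{\partial D}$ in $B_{2l+2}$, where $\Delta$ is the Garside half-twist: applying the Birman--Hilden isomorphism to both sides turns this into $(t_{c_1}t_{c_2}\cdots t_{c_{2l+1}})^{2l+2}=t_{d_1}t_{d_2}$ in the mapping class group of $N$, and since the whole computation is supported inside $N$, the relation persists in the mapping class group of any surface into which this configuration of curves embeds.

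The step I expect to be the main obstacle is pinning down the Birman--Hilden dictionary on the relevant generators precisely: verifying that each half-twist $\sigma_i$ really lifts to a single Dehn twist $t_{c_i}$ about the prescribed chain curve (and not to a power of a twist, nor to a twist about the ``other'' lift) and that $t_{\partial D}$ lifts to $t_{d_1}t_{d_2}$ with multiplicity one on each factor; once the dictionary is fixed on $\sigma_1,\dots,\sigma_{2l+1}$ and on $\Delta^2$, the remainder is a formal transcription of a relation that holds word-for-word in $B_{2l+2}$. A branched-cover-free alternative is induction on $l$, with trivial base case $l=0$ (there $d_1$ and $d_2$ are both isotopic to $c_1$ in an annular neighborhood, so the assertion reads $t_{c_1}^2=t_{c_1}t_{c_1}$); the inductive step splits off $c_{2l},c_{2l+1}$ and rewrites $(t_{c_1}\cdots t_{c_{2l+1}})^{2l+2}$ in terms of the shorter chain using a lantern-type substitution together with the even ($2$-)chain relation, the delicate point there being to track which curves bound which subsurfaces after the substitution.
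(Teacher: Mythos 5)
Your argument is correct, and in fact the paper offers no proof of this lemma at all: it is quoted as a well-known relation (it goes back to Dehn, and the standard modern reference is the chain relation chapter of Farb--Magalit), so there is no in-paper argument to compare against. Your Birman--Hilden route is exactly the standard proof, and the numerics all check out: $\chi(N)=-2l=2\chi(D^2)-(2l+2)$, the boundary monodromy of the double cover is the product of $2l+2$ transpositions and hence trivial, so $\partial D$ lifts to $d_1\sqcup d_2$, each mapped homeomorphically, whence $t_{\partial D}$ lifts to $t_{d_1}t_{d_2}$; and $(\sigma_1\cdots\sigma_{2l+1})^{2l+2}=\Delta^2=t_{\partial D}$ in $B_{2l+2}$ transcribes to the desired relation, which then persists under the homomorphism induced by the embedding $N\hookrightarrow\Sigma$. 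Two small remarks on the point you flag as delicate: you do not need the full Birman--Hilden isomorphism, only the lifting homomorphism $B_{2l+2}\to\Gamma(N,\partial N)$, i.e.\ that every braid lifts to an $\iota$-equivariant class fixing $\partial N$ pointwise and that this lift is unique --- uniqueness is automatic here because the only nontrivial deck transformation $\iota$ swaps $d_1$ and $d_2$ and so does not fix the boundary pointwise; and the verification that a half-twist about the arc from $p_i$ to $p_{i+1}$ lifts to the single Dehn twist $t_{c_i}$ (rather than a power or a twist about a disconnected preimage) is a local computation in the double cover of a disk branched at two points, which is an annulus with core $c_i$. Your inductive alternative is dispensable and, as sketched, much less convincing than the covering-space argument, so I would present only the latter.
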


\begin{theorem} \label{maintheorem2}
Let $g\geq 2$ and $h\geq 1$ be fixed integers. For any positive integer $M$, there exists a relatively minimal genus--$g$ Lefschetz fibration over a surface of genus $h$ admitting a section of maximal self-intersection such that the number of critical points is greater than $M$.
\end{theorem}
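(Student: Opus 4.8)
The plan is to recast the statement as a factorization problem in the surface mapping class group and then to solve it by enlarging the monodromy of the maximal-section surface bundle of Theorem~\ref{maintheorem1} with the help of the chain relation (Lemma~\ref{lem:chain}).

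By the correspondence recalled in Section~2, a relatively minimal genus--$g$ Lefschetz fibration over $\Sigma_h$ with $n$ critical points and a section of self-intersection $2h-2$ is precisely a factorization
\[
t_\delta^{\,2-2h}=t_{v_1}t_{v_2}\cdots t_{v_n}\,[\alpha_1,\beta_1]\cdots[\alpha_h,\beta_h]
\]
in $\Gamma_g^1$, where the vanishing cycles $v_i$ are essential simple closed curves on $\Sigma_g$; choosing the $v_i$ non-separating guarantees that no fiber contains a $(-1)$-sphere, so that the fibration is automatically relatively minimal. Hence it suffices to produce, for every $M$, such a factorization with all $v_i$ non-separating and $n>M$. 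The case $n=0$ is supplied by Corollary~\ref{cor:ez-d^2n}: $t_\delta^{\,2-2h}$ is a product of $h$ commutators, which is the monodromy of the maximal-section $\Sigma_g$--bundle over $\Sigma_h$. The problem is therefore to insert arbitrarily many non-separating twists into this relation while \emph{neither} changing the boundary element $t_\delta^{\,2-2h}$ (which keeps the section at self-intersection $2h-2$) \emph{nor} increasing the number of commutators beyond $h$ (which keeps the base genus equal to $h$). We stress that this cannot be achieved by a fiber sum: by Propositions~\ref{adjunctionbounds} and~\ref{fibersumindecomposablity} any fiber sum involving a maximal section drops its self-intersection below the adjunction bound, so the usual device for inflating the number of singular fibers is unavailable and the enlargement must be internal to $\Gamma_g^1$.

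The source of many twists is the chain relation. Since $g\geq 2$, we may choose inside the fiber $\Sigma_g^1$ a chain $c_1,\dots,c_{2l+1}$ of non-separating curves whose regular neighbourhood is a genus--$l$ subsurface with two boundary curves $d_1,d_2$, and we arrange the embedding (taking $l+1\le g$, e.g. $l=1$) so that $d_1$ and $d_2$ are isotopic in $\Sigma_g^1$ to a single non-separating curve $c$. Lemma~\ref{lem:chain} then gives $(t_{c_1}\cdots t_{c_{2l+1}})^{2l+2}=t_{d_1}t_{d_2}=t_c^{\,2}$, so that for every $N$
\[
t_c^{\,2N}=(t_{c_1}t_{c_2}\cdots t_{c_{2l+1}})^{(2l+2)N}
\]
is a word in $(2l+1)(2l+2)N$ positive non-separating Dehn twists. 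The construction is to exhibit a factorization of $t_\delta^{\,2-2h}$ in which such a power $t_c^{\,2N}$ occurs explicitly: using the commutator identity $[\phi,t_c^{\,2N}]=t_{\phi(c)}^{\,2N}\,t_c^{-2N}$ together with Lemma~\ref{lem:comm} to keep the commutator count equal to $h$ and the product equal to $t_\delta^{\,2-2h}$, and a second chain supported on a disjoint genus--$l$ subsurface (with boundary curve isotopic to $\phi(c)$) to dispose of the book-keeping powers at the cost of only $O(1)$ further non-separating twists. Replacing $t_c^{\,2N}$ by its chain word then yields, for each $N$, a relatively minimal genus--$g$ Lefschetz fibration over $\Sigma_h$ with a section of self-intersection $2h-2$ and $(2l+1)(2l+2)N+O(1)$ critical points; taking $N$ large enough that this exceeds $M$ finishes the proof.

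The step I expect to be the main obstacle is exactly this last one: fitting the chain, the conjugating diffeomorphism, and the compensating chain simultaneously into a single genus--$g$ fiber so that after the substitution the boundary word is still $t_\delta^{\,2-2h}$ and the commutator part still has length $h$. The delicate point is that $t_c^{\,2N}$ by itself has commutator length growing with $N$, so it cannot be absorbed into the $h$ available commutators; it can only be introduced in tandem with the twists that the chain relation produces, and verifying that the entire configuration is realizable inside $\Sigma_g$ — which is where the hypothesis $g\geq 2$ is genuinely used — is the crux of the argument.
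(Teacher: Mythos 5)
Your reduction of the theorem to a factorization problem in $\Gamma_g^1$, and your overall strategy---inflate the word $t_\delta^{2-2h}=\prod_{j=1}^h[\alpha_j,\beta_j]$ internally by chain relations while absorbing the compensating negative twists into the $h$ commutators via Lemma~\ref{lem:comm}---is exactly the route the paper takes. But the proof stops precisely where the work begins, and the one concrete device you do specify fails. You want a chain $c_1,\dots,c_{2l+1}$ in the fiber whose regular neighborhood has boundary curves $d_1,d_2$ isotopic \emph{in $\Sigma_g^1$} to a single nonseparating curve $c$, so that $t_{d_1}t_{d_2}=t_c^2$ and $t_c^{2N}$ becomes a long positive chain word. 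This is impossible: two disjoint essential simple closed curves are isotopic only if some component of the complement of their union is an annulus cobounded by them, and here the complement of the chain neighborhood must carry the boundary circle $\delta$ among its boundary components (you must keep $\delta$, since the power of $t_\delta$ records the section's self-intersection), so no complementary component is such an annulus and $d_1$ is never isotopic to $d_2$ in $\Sigma_g^1$. They become isotopic only after capping off $\delta$, i.e.\ in $\Gamma_g$, where the exponent of $t_\delta$---and with it maximality of the section---is no longer visible. Consequently $t_c^{2N}$ is not available, and the remaining scheme (the identity $[\phi,t_c^{2N}]=t_{\phi(c)}^{2N}t_c^{-2N}$ plus a ``second chain'' disposing of the bookkeeping powers with only $O(1)$ extra twists while keeping exactly $h$ commutators and the product equal to $t_\delta^{2-2h}$) is asserted, not carried out; as you yourself note, this is the crux, and it is missing. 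Note also that a power such as $t_c^{-2N}$ cannot be ``disposed of'' in isolation: it must be combined with matching positive twist powers into a balanced word of the form handled by Lemma~\ref{lem:comm}, and exhibiting curves for which this happens is the whole content of the step.

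For comparison, the paper resolves exactly this point by applying the chain relation to chains whose end curves are curves \emph{already present} in the generalized-lantern factorization~\eqref{eqn:delta2nn} of $t_\delta^{2-2h}$. For $h\geq 2$ one gets positive words $T_1,T_2$ with $T_1^m=t_{x_2}^{m}t_{a_3}^{m}t_{a_1}^{-2m}$ and $T_2^m=t_{x_1}^{m}t_{a_1}^{m}t_{a_2}^{-m}t_{a_3}^{-m}$; inserting the corresponding trivial words into the single commutator $C=t_{a_1}t_{x_2}^{-1}t_{a_2}t_{x_1}^{-1}$ leaves the balanced leftover $t_{x_1}^{-m-1}t_{x_2}^{-m-1}t_{a_1}^{m+1}t_{a_2}^{m+1}$, which is again a commutator by Lemma~\ref{lem:comm} (conjugated by $T_1^m$), so that $t_\delta^{2-2h}$ becomes $h$ commutators times $4(2g+1)m$ positive nonseparating twists. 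The case $h=1$ needs a separate argument, since there $t_\delta^0=1$ and one must write the identity as one commutator times arbitrarily many positive twists; this is done directly from the chain relation, with the leftover $t_{d_1}^{-m}t_{d_2}^{-m}t_{c_1}^{m}t_{c_{2l-1}}^{m}$ being a commutator by Lemma~\ref{lem:comm}. Your proposal contains neither the curve configuration nor the absorption identity that makes this work, so as it stands it is a plausible plan rather than a proof.
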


\begin{proof} By Proposition~\ref{adjunctionbounds},
the self-intersection of a maximal section of a genus--$g$ Lefschetz fibration over a surface
of genus $h\geq 1$ is $2h-2$. Therefore, it suffices to write $t_\delta^{2-2h}$ as a product of $h$ commutators and
a number of right Dehn twists such that the number of right Dehn twists is at least $M$.

Suppose first that $h\geq 2$. Consider the surface $\Sigma_{g}^1$ in Figure~\ref{lantern-59}(ii) and the curves on it.
By~\eqref{eqn:delta2nn}, we have the following equality the mapping class group of $\Sigma_{g}^1$;
\begin{eqnarray} \label{eqn:del-2n}
t_\delta^{-2k}
 &=&( t_{a_3}^k t_{x_4}^{-k}  t_{a_4}^k  t_{x_3}^{-k} )
 \left( \prod_{i=1}^{k}  (t_{a_1} t_{x_2}^{-1} t_{a_2} t_{x_1}^{-1})^{t_{x_3}^{k-i}} \right).
\end{eqnarray}
By writing $k=h-1$, $C_{h-1}=t_{a_3}^{h-1} t_{x_4}^{1-h}  t_{a_4}^{h-1}    t_{x_3}^{1-h}$,
$C= t_{a_1} t_{x_2}^{-1} t_{a_2} t_{x_1}^{-1}$ and $C_i= t_{x_3}^{i} C t_{x_3}^{-i}$, $1\leq i\leq h-2$,
we may rewrite \eqref{eqn:del-2n} as
 \begin{eqnarray}\label{eqn:delta2-2h}
 t_\delta^{2-2h} &=& C_{h-1} C_{h-2}\cdots  C_2 C_1 C,
\end{eqnarray}
where each $C_i$ is a commutator.

On the other hand, by Lemma~\ref{lem:chain}, we have the relation $ \left( t_{a_1}t_{r}t_{b}\right)^4= t_{x_2}t_{a_3}$. From this one we easily get
\begin{eqnarray}\label{eqn:T-1x}
t_{r} t_{a_1} t_{b} t_{r} \left( t_{a_1}t_{r}t_{b}\right)^2= t_{x_2}t_{a_3} t_{a_1}^{-2}.
\end{eqnarray}
Let $T_1$ denote the left hand side of the equality~\eqref{eqn:T-1x}, so that $T_1$ is a
product of $10$ right Dehn twists. Thus, for any integer $m$, we have
\begin{eqnarray*}
T_1^m = t_{x_2}^{m} t_{a_3}^{m} t_{a_1}^{-2m}
\end{eqnarray*}
or
\begin{eqnarray}\label{eqn:T-1x=1}
T_1^m  t_{a_1}^{2m} t_{a_3}^{-m}  t_{x_2}^{-m} =1.
\end{eqnarray}

By Lemma~\ref{lem:chain}, we also have the relation
\begin{eqnarray}\label{eqn:T-2x}
 \left( t_{c_1} t_{c_2}\cdots t_{c_{2g-3}} t_{c_{2g-2}} t_{b} \right)^{2g}= t_{x_1}t_{a_1}.
\end{eqnarray}
The left hand side of this equality may be written as $T_2\left( t_{c_1} t_{c_2}\cdots t_{c_{2g-3}} \right)^{2g-2} $,
where $T_2$ is a product of right Dehn twists. (The number of Dehn twists in $T_2$ is $8g-6$.)
Applying Lemma~\ref{lem:chain} once again,
we see that it is also equal to $T_2 t_{a_2}t_{a_3}$. Hence, we obtain from (\ref{eqn:T-2x}) that
\begin{eqnarray*}
T_2^m =  t_{x_1}^{m} t_{a_1}^{m}t_{a_2}^{-m} t_{a_3}^{-m},
\end{eqnarray*}
or
\begin{eqnarray}\label{eqn:T-2x=1}
t_{x_1}^{-m}  t_{a_1}^{-m}  t_{a_2}^{m}  t_{a_3}^{m}  T_2^m =1.
\end{eqnarray}

Finally, by using (\ref{eqn:T-1x=1}) and (\ref{eqn:T-2x=1}), we write $C$ as
\begin{eqnarray*}
 C &=& t_{a_1} t_{x_2}^{-1} t_{a_2} t_{x_1}^{-1}\\
 &=& t_{x_2}^{-1} t_{x_1}^{-1}  t_{a_1} t_{a_2} \\
 &=&  (  T_1^m  t_{a_1}^{2m} t_{a_3}^{-m}  t_{x_2}^{-m}  ) t_{x_2}^{-1} t_{x_1}^{-1} t_{a_1} t_{a_2}
 (  t_{x_1}^{-m}  t_{a_1}^{-m}  t_{a_2}^{m}  t_{a_3}^{m}  T_2^m ) \\
  &=&  T_1^m  t_{x_2}^{-m-1}  t_{x_1}^{-m-1} t_{a_1}^{m+1} t_{a_2}^{m+1}  T_2^m\\
  &=&  \left( T_1^m  t_{x_2}^{-m-1}  t_{x_1}^{-m-1} t_{a_1}^{m+1} t_{a_2}^{m+1} T_1^{-m}\right) T_1^m  T_2^m.
 \end{eqnarray*}
If we let $C_0^{(m)}=T_1^m  (t_{x_1}^{-m-1}  t_{x_2}^{-m-1} t_{a_1}^{m+1} t_{a_2}^{m+1})  T_1^{-m}$, then $C_0^{(m)}$ is a commutator.
By~\eqref{eqn:delta2-2h}, we then have
 \begin{eqnarray*}
 t_\delta^{2-2h} &=& C_{h-1} C_{h-2}\cdots  C_2 C_1 C_0^{(m)} T_1^mT_2^m,
\end{eqnarray*}
so that $ t_\delta^{2-2h}$ is expressed as a product of $h$ commutators and $4(2g+1)m$ right Dehn twists. Taking $m$ large enough finishes the proof of the theorem in the case $h\geq 2$.

Suppose now that $h=1$, so that $2h-2=0$.
Consider the surface $\Sigma_{g}^1$ and the curves on it given in Figure~\ref{lantern-59}(ii) once again. Let $2\leq l\leq g$ and let $d_1$ and $d_2$ be the boundary components of a regular neighborhood of $c_{1}\cup c_{2}\cup c_{3}\cup \cdots \cup c_{2l-1}$, where $c_{2g-1}=b$ and  $c_{2g}=r$.

By Lemma~\ref{lem:chain}, we have the relation $ \left( t_{c_1}t_{c_2}\cdots t_{c_{2l-1}} \right)^{2l}=t_{d_l}t_{d_2}$.
From this we get
\begin{eqnarray}\label{eqn:T-l}
\left( t_{c_2}\cdots t_{c_{2l-1}} \right) \left( t_{c_1}t_{c_2}\cdots t_{c_{2l-1}} \right)^{2l-2} \left( t_{c_1}t_{c_2}\cdots t_{c_{2l-2}} \right) =t_{d_l} t_{d_2} t_{c_1}^{-1} t_{c_{2l-1}}^{-1}.
\end{eqnarray}
Let $T(l)$ denote the left hand side of (\ref{eqn:T-l}), so that $T(l)$ is a product of right Dehn twists.
Thus, for any integer $m$ we have
\begin{eqnarray*}
(T(l))^m = t_{d_l}^{m} t_{d_2}^{m} t_{c_1}^{-m}t_{c_{2l-1}}^{-m},
\end{eqnarray*}
or
\begin{eqnarray}\label{eqn:T-l=1}
t_\delta^{0} =1 =t_{d_l}^{-m} t_{d_2}^{-m} t_{c_1}^{m}t_{c_{2l-1}}^{m} (T(l))^m.
\end{eqnarray}
If we let $C(l) =t_{d_l}^{-m} t_{d_2}^{-m} t_{c_1}^{m}t_{c_{2l-1}}^{m}$, then $C(l)$ is a commutator.
By (\ref{eqn:T-l=1}), we then have
\begin{eqnarray}\label{eqn:T-l=cl}
 t_\delta^{0} =1 =C(l)(T(l))^m.
\end{eqnarray}

Again, taking $m$ large enough finishes the proof in the case $h= 1$.
\end{proof}

\begin{remark}
It should be evident that the techniques used in the proof of Theorem~\ref{maintheorem2} fall short to cover the remaining case $h=0$, and thus, the question remains to be open for $h=0$ and $g \geq 3$.
\end{remark}

\begin{remark} Proposition \ref{fibersumindecomposablity} implies that the Lefschetz fibration and section pairs we construct in Theorem \ref{maintheorem2} are fiber sum indecomposable. It is apriori unclear whether or not these fibrations (but not the pairs) can be decomposed as fiber sums where one of the fibrations is over the $2$-sphere. We shall note, however, by employing slightly different manipulations of mapping class group relations, we can construct examples as in Theorem \ref{maintheorem2} meeting these additional conditions as well, relying on Smith's fillability criterion in \cite{Smith}. Nevertheless, we will skip this rather repetitive inclusion here, which does not seem to relate to the core problem in question.
\end{remark}

\begin{rmk}
Theorem \ref{maintheorem2} shows the existence of genus--$g$ Lefschetz fibrations with maximal sections over surfaces of positive genera, similar to our result casted in Theorem \ref{maintheorem1}.
\end{rmk}

\vspace{0.2in}
\noindent \textit{Acknowledgements.} The first author was partially supported by the NSF grant DMS-0906912. The second author thanks Max-Planck Institut f\"ur Mathematik in Bonn for its generous support and wonderful research environment. The third author thanks Hisaaki Endo, Kenta Hayano, Dieter Kotschick, and Masatoshi Sato for their comments on the content of this paper.

\vspace{0.3in}

\end{document}